\numberwithin{equation}{section}
\newtheorem{satz}{Satz}
\newtheorem{proposition}[satz]{Proposition}
\newtheorem{theorem}{Theorem}
\newtheorem*{theorem*}{Theorem}
\newtheorem{lemma}[satz]{Lemma}
\newtheorem{corollary}[satz]{Corollary}
\theoremstyle{definition}
\newtheorem*{remark*}{Remark}
\newtheorem{remark}[satz]{Remark}
\newtheorem{example}{Example}
\newcommand{\tensor}{\otimes}
\newcommand{\map}[1]{\stackrel{#1}{\longrightarrow}}
\newcommand{\un}[1]{\ensuremath{\protect\underline{#1}}}
\def\GL{\textrm{GL}}
\def\PGL{\textrm{PGL}}
\DeclareMathOperator{\Pic}{Pic}
\DeclareMathOperator{\Bun}{Bun}
\DeclareMathOperator{\Chain}{Chain}
\DeclareMathOperator{\PChain}{PChain}
\DeclareMathOperator{\PPChain}{PPChain}
\DeclareMathOperator{\Higgs}{Higgs}
\DeclareMathOperator{\Sym}{Sym}
\DeclareMathOperator{\Lie}{Lie}
\DeclareMathOperator{\rk}{rk}
\DeclareMathOperator{\Var}{Var}
\DeclareMathOperator{\id}{id}
\DeclareMathOperator{\im}{Im}
\DeclareMathOperator{\coker}{coker}
\DeclareMathOperator{\LHS}{LHS}
\DeclareMathOperator{\sst}{ss}
\def\K0hat{\widehat{K}_0(\Var_k)}
\def\KhatC{\widehat{K}_0(\Var_{\bC})}
\def\1halb{\frac{1}{2}}
\def\sxymat{\xymatrix@C=1.5ex@R=0.8ex}
\def\grp{$\xymatrix{ R\times_{X}R  \ar[r]^-{\mu} & R \ar@<1ex>[r]^-{s}\ar@<-1ex>[r]_-{t} & X}$}
\def\dar{\ar@<-0.5ex>[r]\ar@<0.5ex>[r]}
\def\tar{\ar[r]\ar@<1ex>[r]\ar@<-1ex>[r]}
\newcommand{\dmap}[2]{\ar@<-0.5ex>[r]_-{#2}\ar@<0.5ex>[r]^-{#1}}
\newcommand{\dotarrow}[2]{\xymatrix{{#1}\ar@{..>}[r]&{#2}}}
\def\cart{\ar@{}[dr]|{\square}}
\def\cE{\mathcal{E}}
\def\cF{\mathcal{F}}
\def\cK{\mathcal{K}}
\def\cM{\mathcal{M}}
\def\cO{\mathcal{O}}
\def\cP{\mathcal{P}}
\def\bA{{\mathbb A}}
\def\bC{{\mathbb C}}
\def\bG{{\mathbb G}}
\def\bL{{\mathbb L}}
\def\bN{{\mathbb N}}
\def\bP{{\mathbb P}}
\def\bQ{{\mathbb Q}}
\def\bR{{\mathbb R}}
\def\bZ{{\mathbb Z}}
\begin{document}
\SelectTips{cm}{}

\title[A conjecture of Hausel]{The $y$ genus of the moduli space of $\PGL_n$-Higgs bundles on a curve (for degree coprime to $n$)}
\author[O. Garcia-Prada]{Oscar Garc\'ia-Prada}
	\address{Instituto de Ciencias Matem\'aticas CSIC-UAM-UC3M-UCM,
	calle Nicol\'as Cabrera, 15, Campus de Cantoblanco, 28049 Madrid, Spain}
	\email{oscar.garcia-prada@icmat.es}
\author[J. Heinloth]{Jochen Heinloth}
\address{Universit\"at Duisburg--Essen, Fachbereich Mathematik, Universit\"atsstrasse 2, 45117 Essen, Germany}
\email{Jochen.Heinloth@uni-due.de}

\begin{abstract}
Building on our previous joint work with A. Schmitt \cite{HGS} we explain a recursive algorithm to determine the cohomology of moduli spaces of Higgs bundles on any given curve (in the coprime situation). As an application of the method we compute the $y$-genus of the space of $\PGL_n$-Higgs bundles for any rank $n$, confirming a conjecture of T. Hausel.
\end{abstract}
\maketitle

\section{Introduction}


In the article \cite{HGS} we explained a strategy to compute the cohomology of the moduli space of semistable Higgs bundles of coprime rank and degree on a smooth projective curve. However, we were only able to fully implement this strategy in some cases, because of a convergence problem in one of the steps of our approach. In this article we explain how one can bypass this convergence problem using a wall crossing procedure (Theorem \ref{factorP1} and Remark \ref{algorithm}).

Although the recursive procedure to compute the cohomology of the moduli space that we find is quite involved, T.\ Hausel suggested \cite[Conjecture 5.7]{HauselMirror} that the procedure should simplify for a particular cohomological invariant of the moduli space $\cM^{d,\sst}(\PGL_n)$ of semistable $\PGL_n$ Higgs bundles, called the $y$-genus. For varieties $X$ over $\bC$ the (compactly supported) $y$-genus is defined as 
$$H_y(X,y):=\sum_{p,q\geq 0} (-1)^{k} h^{k;p,q}_c(X) y^p = E(X,1,y),$$ where $h^{k;p,q}_c(X)$ is the weight $p,q$ part of the compactly supported cohomology group $H^k_c(X)$ (see Section \ref{main}). In particular  $H_y(X,1)$ is the Euler characteristic of $X$.

Our main result confirms the conjecture of Hausel on the $y$-genus for arbitrary rank $n$:

\begin{theorem*}[Hausel's conjecture for the $y$-genus]
Let $n\in \bN$ and $d\in \bZ$ be coprime. Then the  $y$-genus of the moduli space of semistable $\PGL_n$-Higgs bundles of degree $d$ on a curve of genus $g$ is given by:
\begin{align*}
H_y(\cM^{d,\sst}(\PGL_n),y)=& y^N \left(\frac{1-y^n}{1-y}\right)^{g-1} \sum_{m|n}  \frac{\mu(m)}{m} \Big(m \prod_{j=1}^{\frac{n}{m}-1}(1-y^{jm})^2   \Big)^{g-1},
\end{align*}
where $N=(n^2-1)(g-1)=\1halb \dim(\cM^{d,\sst}(\PGL_n)$ and $\mu$ denotes the M\"obius $\mu$-function.
\end{theorem*}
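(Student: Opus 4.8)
The plan is to deduce the formula from the recursive determination of the compactly supported $E$-polynomial $E\bigl(\cM^{d,\sst}(\GL_n),u,v\bigr)$ of the moduli space of semistable rank-$n$ Higgs bundles furnished by Theorem~\ref{factorP1} and Remark~\ref{algorithm}. The first step is to reduce the $\PGL_n$-statement to one about $\GL_n$. Fix a line bundle $L_0$ of degree $d$ on $C$ and write $\cM^{d,\sst}(\SL_n)$ for the moduli space of semistable Higgs bundles $(E,\phi)$ with $\det E\cong L_0$ and $\tr\phi=0$. The assignment $\bigl((E,\phi),L,\theta\bigr)\mapsto(E\tensor L,\ \phi+\theta\cdot\id)$ identifies $\cM^{d,\sst}(\GL_n)$ with the quotient of $\cM^{d,\sst}(\SL_n)\times T^*\Pic^0(C)$ by $\Gamma:=\Pic^0(C)[n]$, which acts by tensorisation on the first factor and by translation --- hence \emph{freely} --- on the second. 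Since translations act trivially on $H^*_c\bigl(T^*\Pic^0(C)\bigr)$, the K\"unneth formula together with the identity $\cM^{d,\sst}(\PGL_n)=\cM^{d,\sst}(\SL_n)/\Gamma$ gives an isomorphism of mixed Hodge structures and hence the polynomial identity
\[
E\bigl(\cM^{d,\sst}(\GL_n),u,v\bigr)\ =\ (uv)^g(1-u)^g(1-v)^g\cdot E\bigl(\cM^{d,\sst}(\PGL_n),u,v\bigr).
\]
The same argument works for every rank and degree, so each such $\GL$-polynomial is divisible by $(1-u)^g$. It therefore suffices to compute $E\bigl(\cM^{d,\sst}(\GL_n),u,v\bigr)$, divide by $(uv)^g(1-u)^g(1-v)^g$, and put $u=1$, since $H_y\bigl(\cM^{d,\sst}(\PGL_n),y\bigr)=E\bigl(\cM^{d,\sst}(\PGL_n),1,y\bigr)$.

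The second step is to run the recursion. The Bialynicki-Birula decomposition for the natural $\bC^*$-action on $\cM^{d,\sst}(\GL_n)$ writes its $E$-polynomial as a sum, weighted by powers of $uv$, of the $E$-polynomials of the $\bC^*$-fixed loci --- moduli of holomorphic chains --- and these are in turn computed by Theorem~\ref{factorP1}, whose $\bP^1$-wall-crossing removes the convergence obstruction of \cite{HGS}, in terms of $E$-polynomials of lower-rank chains, of moduli (stacks) of bundles, of symmetric powers of $C$, of Jacobians, and of relative flag bundles, with explicit combinatorial coefficients. Assembling the answers over all ranks into a generating series converts the recursion into a closed functional equation, which one solves by the standard plethystic (exponential/logarithm) manipulation; this yields a closed formula for the series and, by the first step, for $E\bigl(\cM^{d,\sst}(\PGL_n),u,v\bigr)$.

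The third step --- which is the point of Hausel's observation --- is that the evaluation at $u=1$ of this closed formula is incomparably simpler than the full polynomial. The mechanism is the divisibility recorded above, applied to every rank occurring in the recursion: a term which is a product of two or more moduli of Higgs bundles or chains of positive rank is divisible by $(1-u)^{2g}$, and hence --- after dividing by $(uv)^g(1-u)^g(1-v)^g$ --- still vanishes at $u=1$; a short accounting of orders of vanishing along $u=1$ then shows that all but finitely many terms disappear. What remains is a sum indexed by the divisors $m\mid n$ --- the term for $m$ arising (after the $\Gamma$-quotient) from the locus of Higgs bundles built from a single rank-$m$ object taken $n/m$ times --- and collecting these contributions, the surviving combinatorial coefficients turn into the M\"obius-weighted products and one obtains
\[
y^{N}\left(\frac{1-y^{n}}{1-y}\right)^{g-1}\sum_{m\mid n}\frac{\mu(m)}{m}\Bigl(m\prod_{j=1}^{\frac{n}{m}-1}(1-y^{jm})^{2}\Bigr)^{g-1},
\]
with $N=\1halb\dim\cM^{d,\sst}(\PGL_n)$; the prefactor $y^{N}$ comes from the $\bC^*$-scaling action and the factor $\bigl(\tfrac{1-y^{n}}{1-y}\bigr)^{g-1}$ from the ``diagonal torus'' part of each surviving term. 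This is the asserted identity.

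I expect the third step to be the main obstacle. One has to control precisely the order of vanishing along $u=1$ of each of the (many, and combinatorially intricate) terms produced by Theorem~\ref{factorP1}, so as to isolate exactly those surviving the division of the first step, and then verify that the surviving data reassembles into the M\"obius sum above --- in particular that its signs organise into $\mu$. This bookkeeping of vanishing orders, not any single geometric input, is the technical heart of the argument; by contrast the first two steps amount to a reduction together with an involved but essentially mechanical unwinding of the algorithm of \cite{HGS} and Theorem~\ref{factorP1}.
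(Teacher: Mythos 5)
Your overall mechanism --- reduce to $\GL_n$, use divisibility by $(1-u)^g$ coming from the Jacobian factor, and observe that after dividing by $(1-u)^g$ only the terms not divisible by $(1-u)^{2g}$ survive at $u=1$ --- is indeed the paper's strategy (the paper phrases the divisibility as $[\Chain_{\un{n}}^{\un{d},\alpha-\sst}]=[\Bun_1^0]^2\cdot[\PPChain_{\un{n}}^{\un{d},\alpha-\sst}]$ whenever the rank vector is non-constant, so that only $\un{n}=(m,\dots,m)$ contributes to the $y$-genus). But there are two genuine problems. First, your second step claims that assembling the recursion into a generating series and applying plethystic exponential/logarithm manipulations yields a closed formula for $E\bigl(\cM^{d,\sst}(\PGL_n),u,v\bigr)$. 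No such closed formula is obtained, or needed, anywhere: Theorem \ref{factorP1} and Remark \ref{algorithm} give only a finite recursive algorithm, not a solvable functional equation, and the paper explicitly notes that the conjectured closed form for the $E$-polynomial (Hausel--Rodr\'{\i}guez-Villegas) is verified only for $n\leq 4$. The entire point of restricting to the $y$-genus is that one can bypass evaluating the recursion in closed form.

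Second, and more seriously, the step you defer as ``bookkeeping'' is the actual content of the proof, and your proposal does not carry it out. For the surviving rank vectors $\un{n}=(m,\dots,m)$ one needs: (i) the explicit formula of \cite[Corollary 6.10]{HGS} expressing $H_y([\PChain_{\un{n}}^{\un{d},\alpha-\sst}],y)$ as $\prod_{i=2}^{m-1}(1-y^i)^{g-1}(1-y^{i+1})^{g-1}$ times $\prod_i H_y(\Sym^{d_i-d_{i+1}}(C\times\bP^{m-1}),y)$, which is legitimate only because the wall-crossing corrections of Proposition \ref{walls} are products of at least two semistable chain classes, hence divisible by $[\Bun_1^0]^2$ and invisible to $H_y$; (ii) the vanishing $H_y(\Sym^k(C\times\bP^{m-1}),y)=0$ for $k>m(2g-2)$, which is what makes the sum over degrees finite; and (iii) a roots-of-unity filter over the congruence $\sum i k_i\equiv d \bmod \frac{n}{m}$ imposed by fixing the total degree, which via $Z(C\times\bP^{m-1},t)=\prod_{j=0}^{m-1}Z(C,\bL^j t)$ and the identity $\mu(k)=\sum_{1\leq j\leq k,\ (j,k)=1}e^{2\pi i j/k}$ is where the M\"obius function actually enters. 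Your proposal asserts that ``the surviving combinatorial coefficients turn into the M\"obius-weighted products'' without exhibiting any of this mechanism; as written it does not establish the stated formula.
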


Let us recall that Hausel obtained his conjecture as a specialization of the conjecture of Hausel and Rodriguez-Villegas \cite{HRV} which gives an expression for the E-polynomial of the moduli space of semistable Higgs bundles. So far the conjectured formula for the E-polynomial has only been confirmed for $n\leq 4$. 

Let us briefly indicate how we obtain our results and thereby give an overview of the article. In Section \ref{sec:preliminaries} we recall the setup used in \cite{HGS}, expressing cohomological computations in terms of the Grothendieck ring of varieties and reducing the calculation for Higgs bundles to a calculation for moduli spaces of chains.

In Section \ref{sec:wallcrossing} we explain how the classical approach to study the geometry of moduli spaces of chains by variation of the stability parameter can be described from a stack theoretic point of view. In particular, the difference between moduli spaces for different stability parameters admits an inductive description, as long as the stability parameter is not smaller than the one arising from the stability condition for Higgs bundles (condition ($\star$) defined in Section \ref{sec:preliminaries}).

In order to apply this result one needs to find extremal stability parameters, for which the moduli space of semistable objects admits an explicit description.
In Section \ref{sec:conditions} we therefore give a list of necessary conditions for the moduli space of semistable chains to be non empty (Proposition \ref{conditions}). As a corollary we find that for fixed rank and stability parameter the space of semi stable chains can only be nonempty for only finitely many values of the degree of the chains (Corollary \ref{finite}).

In Section \ref{sec:application} we apply these results to obtain a recursive method to compute the class of the moduli space of semistable chains for any stability parameter satisfying condition ($\star$). In particular this shows that the class of these spaces can be expressed purely in terms of the symmetric powers of the curve and that most of the classes are divisible by a power of the Jacobian of the curve (Theorem \ref{factorP1}). This result allows us to compute the compactly supported $y$-genus of the moduli space of $\PGL_n$-Higgs bundles in Section \ref{main}, because only very few spaces of semistable chains can have non trivial $y$-genus.

\noindent{\bf Acknowledgments:} We are greatly indebted to T.\ Hausel and A.\ Schmitt, without discussions with them, this article would not exist. For the results of Section 4, numerical data provided by F.\ Rodriguez-Villegas was essential.  The core of this article was written during the Research Programme on Geometry and Quantization of Moduli Spaces at CRM. We would like to thank the organizers and the CRM for their generous support.  In particular we thank I.\ Mundet i Riera for many stimulating discussions.


\section{Preliminaries: Notation and a reminder of Chains and Higgs bundles}\label{sec:preliminaries}

\subsection{Reminder on the Grothendieck ring of varieties}
As in \cite{HGS} we will do our cohomology calculations by using geometric decompositions of the varieties in question. Thus, we find it convenient
to formulate our computations in the Grothendieck ring of varieties over a field $k$. We will denote this ring by $K_0(\Var_k)$, it is the free abelian group generated by isomorphism classes of quasi-projective varieties $[X]$ subject to the relation $[X] = [X-Z] +[Z]$ for all closed subvarieties $Z \subset X$. The fibre product defines a ring structure on this abelian group. The Lefschetz class is denoted by $\bL:=[\bA^1] \in K_0(\Var_k)$. 

Finally, $\K0hat$ denotes the dimensional completion of $K_0(\Var_k)[\frac{1}{\bL}]$, i.e., the completion with respect to the filtration given by the ideals $I_d$, which are generated by $\bL^{-m}[X]$ where $\dim X - m \leq -d$. 

Throughout this article, $C$ will be a fixed, smooth projective, geometrically irreducible curve, defined over a field $k$. All terms entering in our computation can be expressed in terms of the Zeta function of the curve:
$$ Z(C,t):= \sum_{i\geq 0} [\Sym^i(C)] t^i \in \K0hat[[t]].$$
We recall (\cite[Section 3]{FranziskaZeta}) that this Zeta function is a rational function, namely setting $P(t):=\sum_{i=0}^{2g} \Sym^i([C]-[\bP^1]) t^i$ one has: 
$$Z(C,t) = \frac{P(t)}{(1-t)(1-\bL t)}.$$

In case $k=\bC$ is the field of complex numbers the E-polynomial of varieties defines a ring homomorphism:
$$E\colon K_0(\Var_\bC) \to \bZ[x,y]$$
defined by $$E([X],x,y):= \sum_{k=0}^{2 \dim X} \sum_{0\leq p,q} (-1)^k h_c^{k;p,q}(X) x^p y^q$$ whenever $X$ is a variety over $\bC$.
Since $E(\bL,x,y)=xy$ and $h_c^{k;p,q}(X)=0$ if $k>2\dim X$ this extends to a ring homomorphism
$$E\colon \KhatC \to \bZ[x,y][[(xy)^{-1}]].$$

The compactly supported $y$-genus of a complex variety is $H_y([X],y):=E([X],1,y)$, which we can be viewed as a ring homomorphism:
$$ H_y\colon \KhatC \to \bZ[y][[\frac{1}{y}]] = \bZ((y^{-1})).$$

The basic examples which we will need are:
\begin{example}\label{example}
$E(Z(C,t),x,y)=\frac{(1-xt)^g(1-yt)^g}{(1-t)(1-xyt)},\text{ and }$ $E(P(t))=(1-xt)^g(1-yt)^g.$

In particular we find 
\begin{align*}
H_y(P(t),y)&=(1-t)^g(1-yt)^g,\\
H_y(Z(C,t),y)&=(1-t)^{g-1}(1-yt)^{g-1}.
\end{align*}
\end{example}

\subsection{Moduli spaces of Higgs bundles and Chains}

As before we will denote by $C$ a fixed smooth projective curve of genus $g$ over some field $k$. We will denote by $\Bun_n^d$ the moduli stack of vector bundles of rank $n$ and degree $d$ on $C$. The slope of a vector bundle $\cE$ is denoted by $\mu(\cE):=\frac{\deg(\cE)}{\rk(\cE)}$.

We will denote by $\cM_n^{d}$ the moduli stack of Higgs bundles of rank $n$ and degree $d$ on $C$, i.e., it is the moduli stack classifying pairs  $(\cE,\theta\colon \cE \to \cE \tensor \Omega_C)$, where $\cE\in\Bun_n^d$ is a vector bundle on $C$, $\theta$ is an $\cO_C$-linear map and $\Omega_C$ is the sheaf of differential on $C$. 

Recall that a Higgs bundle $(\cE,\theta)$ is called semistable if for all subsheaves $0\subsetneq \cF\subset \cE$ that are preserved by $\theta$ we have 
$\mu(\cF) \leq \mu(\cE)$. We will denote by $\cM_n^{d,\sst}\subset \cM_n^d$ the substack of semistable Higgs bundles and $M_n^d$ denotes the corresponding coarse moduli space. If $(n,d)=1$, the stack $\cM_n^{d,\sst}$ is a neutral $\bG_m$-gerbe over $M_n^d$. In particular we have 
$$[M_n^d]=(\bL-1)[\cM_n^{d,\sst}]\in \K0hat,$$
so that the class of the moduli space is determined by the class of the moduli stack of semistable Higgs bundles. 
Moreover, again assuming $(n,d)=1$ it is known (e.g. \cite[Theorem 2.1]{HauselMirror}) that the cohomology of $M_n^d$ is pure, so that the E-polynomial of $M_n^d$ determines the dimensions of the cohomology groups of $M_n^d$.

Let us recall that the starting point for our computation in \cite{HGS} was the observation due to Hitchin, Simpson, Hausel and Thaddeus, that the class of $[\cM_n^d]$ can be expressed in terms of moduli stacks of semistable chains. Namely, the multiplicative group acts on $\cM_n^d$, by scaling the homomorphism $\theta$ and the fixed points under this action can be interpreted as moduli stacks of chains.

Let us denote by $\Chain_{\un{n}}^{\un{d}}$ the moduli stack of chains rank $\un{n}\in \bN^{r+1}$ and degree $\un{d}\in\bZ^{r+1}$ on  $C$, i.e., this is the algebraic stack classifying  collections $\cE_\bullet:=((\cE_i)_{i=0,\dots,r},(\phi_i)_{i=1,\dots,r})$, where $\cE_i$ are vector bundles of rank $n_i$ and degree $d_i$ on $C$ and $\phi_i\colon\cE_i\to\cE_{i-1}$ are morphisms of $\cO_C$-modules. 

To any chain $\cE_\bullet$ one can associate a Higgs bundle $(\oplus \cE_i \tensor \Omega^i, \oplus \phi_i)$, which induces a notion of stability for $\cE_\bullet$. However there is  a more flexible notion of stability for chains, which depends on a parameter $\un{\alpha} \in \bR^{r+1}$. 
For a chain $\cE_\bullet=(\cE_i,\phi_i)$ the $\un{\alpha}$-slope is defined to be 
$$ \mu_{\un{\alpha}} (\cE_\bullet) := \frac{\sum_{i=0}^{r} \deg(\cE_i) + \rk(\cE_i) \alpha_i}{\sum_{i=0}^{r} \rk(\cE_i)}.$$

A chain $\cE_\bullet$ is called $\un{\alpha}$-semistable, if for all subchains $0\neq \cF_\bullet \subset \cE_\bullet$ we have 
$$ \mu_{\un{\alpha}}(\cF_\bullet) \leq \mu_{\un{\alpha}}(\cE_\bullet).$$
Again we write $\Chain_{\un{n}}^{\un{d},\un{\alpha}-\sst}\subset \Chain_{\un{n}}^{\un{d}}$ for the substack of $\un{\alpha}$-semistable chains.

Tracing back definitions one finds that the Higgs bundle associated to a chain $\cE_\bullet$ is semistable if and only if the chain is $\un{\alpha}$-semistable for the parameter $\un{\alpha}=(\alpha_i)$ with $\alpha_i=i(2g-2)$. 

We will say that $\alpha$ satisfies ($\star$) if 
\begin{align*}
 \alpha_i-\alpha_{i-1}&\geq 2g-2 \textrm{ for } i=1,\dots r.  \tag{$\star$}
\end{align*}

With this notation we can finally recall how to express the class $[\cM_n^{d,\sst}]$ in terms of moduli spaces of chains.
Write $I:=\{ (\un{n},\un{d}) | \sum_i n_i =n, \sum_i (d_i+i(2g-2)n_i)=d \}$. Then it is known (\cite[Proposition 9.1]{HT},\cite[Section 2]{HGS}) that 
$$[\cM_n^{d,\sst}] =  \bL^{n^2(g-1)} \sum_{(\un{n},\un{d})\in I} [\Chain_{\un{n}}^{\un{d},\un{\alpha}-\sst}]\in \K0hat.$$

Finally let us recall that any $\un{\alpha}$-unstable chain $\cE_\bullet$ admits a unique Harder--Narasim\-han filtration, i.e., a filtration $0=\cF_\bullet^0 \subsetneq \cF_\bullet^1 \subsetneq \dots \subsetneq \cF_\bullet^h \subsetneq \cF_\bullet^{h+1}=\cE_\bullet$ such that all subquotients $\cF_\bullet^i/\cF_\bullet^{i-1}$ are semistable and the slopes of the subquotients are decreasing: $\mu_{\un{\alpha}}(\cF_\bullet^1) > \mu_{\un{\alpha}}(\cF_\bullet^2/\cF_\bullet^1) > \dots > \mu_{\un{\alpha}}(\cF_\bullet^{h+1}/ \cF_\bullet^h).$ 

The collection $(\rk(\cF^i_\bullet/\cF^{i-1}_\bullet),\deg(\cF^i_\bullet/\cF^{i-1}_\bullet))_{i=1,\dots h+1}$ is called the type $\un{t}_\alpha$ of the Harder--Narasimhan filtration. Unstable chains such that the Harder-Narasimhan filtration is of some fixed type define locally closed substacks $\Chain_{\un{n}}^{\un{d},\text{type}-\un{t}_{\alpha}}$ of $\Chain_{\un{n}}^{\un{d}}$ called the Harder--Narasimhan strata.

From \cite[Proposition 4.8]{HGS} we know that for stability parameters $\un{\alpha}$ satisfying ($\star$)
we have that for a type $t_{\un{\alpha}}=(\un{n}^i,\un{d}^i)$: 
$$ [\Chain_{\un{n}}^{\un{d},\text{type}-\un{t}_{\alpha}}] = \bL^{\chi} \prod_{i=1}^{h+1} [\Chain_{\un{n}_i}^{\un{d}_i,\un{\alpha}-\sst}] \in \K0hat,$$
where $\chi\in \bZ$ is given by some explicit formula in terms of the type $\un{t}_\alpha$.

\section{Wall crossing from the point of view of stacks}\label{sec:wallcrossing}

On the moduli stack of chains of rank $\un{n}\in \bN^{r+1}$ and degree $\un{d}\in \bZ^{r+1}$ we have a family of stability conditions 
parameterized by a parameter $\un{\alpha}\in\bR^{r+1}$. Since the slope of a chain only depends on these numerical invariants we will also write
$$\mu_{\un{\alpha}} (\un{n},\un{d}):= \frac{\sum d_i +\alpha_i n_i }{\sum n_i}.$$

The parameter $\un{\alpha}$ is called a {\em critical value} if there exists $\un{n}^\prime,\un{d}^\prime$ with $\un{n}^\prime<\un{n}$ such that 
$\mu_{\un{\alpha}}(\un{n}^\prime,\un{d}^\prime)=0$ and $\mu_{\un{\gamma}}(\un{n}^\prime,\un{d}^\prime)\neq 0$ for some $\un{\gamma}\in \bR^{r+1}$. If this happens, the pair $\un{n}^\prime,\un{d}^\prime$ cuts out a hyperplane in the space of stability parameters.

When varying the stability parameter, such that the parameter crosses a wall, it is known that one can in principle describe how the moduli spaces changes. Due to the existence of polystable objects, such a description can be difficult to obtain on the level of coarse moduli spaces. In terms of algebraic stacks it is easier to describe how the stack of semistable objects changes when the stability parameter is deformed into a critical value. In other words, we study the behavior of the moduli stack when the stability parameter runs into a wall (and out of it): 
\begin{proposition}\label{walls}
Let $\un{\alpha}$ be a critical stability parameter and $\delta\in \bR^{r+1}$ and fix a rank $\un{n}\in \bN^{r+1}_0$.
\begin{enumerate}
\item There exists $\epsilon>0$ such that for all $0<t<\epsilon$ the stability conditions $\un{\alpha}_t:=\un{\alpha}+t\delta$ coincide for all chains of rank $\un{m}$ if $\un{m} \leq \un{n}$.
\item For any $0<t<\epsilon$ as in (1) we have $$\Chain_{\un{n}}^{\un{d},\un{\alpha}_t-\sst} \subset \Chain_{\un{n}}^{\un{d},\un{\alpha}-\sst}.$$
Moreover, the complement $\Chain_{\un{n}}^{\un{d},\un{\alpha}-\sst}-\Chain_{\un{n}}^{\un{d},\un{\alpha}_t-\sst}$ is the finite union of the $\un{\alpha}_t$--Harder--Narasimhan strata of $\Chain_{\un{n}}^{\un{d}}$ of type $(\un{n}^i,\un{d}^i)$ such that $\mu_{\un{\alpha}}(\un{n}^i,\un{d}^i) =\mu_{\un{\alpha}}(\un{n},\un{d})$ for all $i$.
\end{enumerate}
\end{proposition}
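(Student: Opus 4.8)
The plan is to derive everything from the standard local finiteness of the wall structure for chains of bounded rank, and then to play off the affine--linearity of slopes against this finiteness. For (1) I would first record that $\mu_{\un\beta}(\un m,\un e)$ is affine--linear in $\un\beta\in\bR^{r+1}$ and depends only on $(\un m,\un e)$; consequently, for each pair of rank vectors $\un{m}'<\un m\leq\un n$ the loci $\{\un\beta\mid\mu_{\un\beta}(\un{m}',\un{e}')=\mu_{\un\beta}(\un m,\un e)\}$ are parallel hyperplanes whose position depends only on $\un{m}'$ and on an explicit rational number with bounded denominator, which therefore ranges over a discrete set as $\un e,\un{e}'$ vary. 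Hence only finitely many critical hyperplanes (for all ranks $\leq\un n$) meet any given bounded region. I would then pick $\epsilon>0$ so small that the segment $\{\un\alpha+s\delta\mid 0<s<\epsilon\}$ meets a critical hyperplane only if that hyperplane contains the whole segment; then all $\un\alpha_s$ with $0<s<\epsilon$ lie in a common cell of the wall--and--chamber decomposition, so for ranks $\un m\leq\un n$ the $\un\alpha_s$-semistable locus does not depend on $s\in(0,\epsilon)$, which is (1).

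For the inclusion in (2) I would argue by contradiction: if $\cE_\bullet$ is $\un\alpha_t$-semistable of rank $\un n$, degree $\un d$, but not $\un\alpha$-semistable, take the maximal destabilizing subchain $\cF_\bullet\subsetneq\cE_\bullet$ of its $\un\alpha$--Harder--Narasimhan filtration, so $\mu_{\un\alpha}(\cF_\bullet)>\mu_{\un\alpha}(\cE_\bullet)$ and $\rk\cF_\bullet<\un n$. The function $g(s):=\mu_{\un\alpha+s\delta}(\cF_\bullet)-\mu_{\un\alpha+s\delta}(\cE_\bullet)$ is affine in $s$ with $g(0)>0$; if it is constant it stays positive, and if not, its zero set in parameter space is a critical hyperplane for rank $\un n$ not containing the segment, hence disjoint from $\{\un\alpha+s\delta\mid 0<s<\epsilon\}$ by the choice of $\epsilon$, so $g$ has constant sign there and $g(t)>0$. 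Either way $\mu_{\un\alpha_t}(\cF_\bullet)>\mu_{\un\alpha_t}(\cE_\bullet)$, contradicting $\un\alpha_t$-semistability; this proves $\Chain_{\un n}^{\un d,\un\alpha_t-\sst}\subseteq\Chain_{\un n}^{\un d,\un\alpha-\sst}$.

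To identify the complement I would take $\cE_\bullet$ which is $\un\alpha$-semistable but not $\un\alpha_t$-semistable, with $\un\alpha_t$--Harder--Narasimhan filtration $0=\cF_\bullet^0\subsetneq\cdots\subsetneq\cF_\bullet^{h+1}=\cE_\bullet$ of type $(\un{n}^i,\un{d}^i)$. For $1\leq i\leq h$ the see--saw property of the HN filtration gives $\mu_{\un\alpha_t}(\cF_\bullet^i)>\mu_{\un\alpha_t}(\cE_\bullet)$, while $\un\alpha$-semistability gives $\mu_{\un\alpha}(\cF_\bullet^i)\leq\mu_{\un\alpha}(\cE_\bullet)$; running the same wall--avoidance argument on $s\mapsto\mu_{\un\alpha+s\delta}(\cF_\bullet^i)-\mu_{\un\alpha+s\delta}(\cE_\bullet)$ (which would otherwise force the segment to cross a critical hyperplane for rank $\un n$) forces $\mu_{\un\alpha}(\cF_\bullet^i)=\mu_{\un\alpha}(\cE_\bullet)$, hence $\mu_{\un\alpha}(\un{n}^i,\un{d}^i)=\mu_{\un\alpha}(\un n,\un d)$ for all $i$ by additivity of rank and degree. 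So the complement is contained in the union of the $\un\alpha_t$--HN strata of such types. For the reverse inclusion I would check that a chain $\cE_\bullet$ in such a stratum is $\un\alpha$-semistable: each HN subquotient is $\un\alpha_t$-semistable of rank $<\un n$, hence $\un\alpha$-semistable by the inclusion just proved (with the same $\epsilon$), all of $\un\alpha$-slope $\mu_{\un\alpha}(\cE_\bullet)$, and $\un\alpha$-semistable chains of a fixed slope are closed under extensions; and $\cE_\bullet$ is not $\un\alpha_t$-semistable because $h\geq1$. Finally the union is finite because $\Chain_{\un n}^{\un d,\un\alpha-\sst}$ is of finite type, so only finitely many HN strata meet it.

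The only genuinely non--formal ingredient is the local finiteness of the wall structure, which rests on the boundedness of semistable chains of bounded rank; granting that and the resulting good choice of $\epsilon$, the remainder is bookkeeping with slopes and the see--saw property, so I expect the substantive work — and the references to \cite{HGS} and to the literature on chains — to be concentrated there.
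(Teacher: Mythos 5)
Your proposal is correct and follows essentially the same route as the paper: $\epsilon$ is chosen via the discreteness/local finiteness of the walls for ranks $\leq\un{n}$ (bounded denominators), the inclusion is the standard continuity-of-slopes argument, and the complement is identified by showing the $\un{\alpha}_t$-HN subquotients all have $\un{\alpha}$-slope equal to $\mu_{\un{\alpha}}(\cE_\bullet)$ and that such strata consist of extensions of $\un{\alpha}$-semistable chains of equal slope, with finiteness from the finite-typeness of the semistable locus. Your write-up is in places slightly more explicit than the paper's (e.g.\ the constant-sign argument on all of $(0,\epsilon)$ and the two-sided identification of the complement), but the substance is identical.
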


\begin{proof}
To simplify notation, let us abbreviate the $\un{\alpha}_t$ slope by $\mu_t:=\mu_{\un{\alpha}_t}$ and let us write $n:=\sum_{i=0}^r n_i$.
The existence of a constant $\epsilon$ as in (1) is well-known: If $\un{\alpha}_t,\un{\alpha}_s$ define different stability conditions, then there exists chains $\cF_\bullet\subset \cE_\bullet$ and $t\leq x \leq s$ such that $\mu_x(\cF_\bullet)=\mu_x(\cE_\bullet)$ but either $\mu_t(\cF_\bullet)>\mu_t(\cE_\bullet)$ or $\mu_s(\cF_\bullet)> \mu_s(\cE_\bullet)$.

By definition $\mu_t(\cF_\bullet)-\mu_t(\cE_\bullet)= c + t\cdot \frac{\sum m_i \delta_i}{M}$, where $c$ is a rational number, with denominator bounded by $n(n-1)$, $\delta_i\in \bZ$, and $m_i,M$ are integers satisfying $M\leq n(n-1)$ and $m_i\leq n(n-1)$. Therefore $x$ must lie in a discrete subset of $\bR$, which proves the existence of an $\epsilon>0$ as claimed. 

Statement (2) follows from the standard argument for wall-crossing phenomena: If $\cE_\bullet\in \Chain_{\un{n}}^{\un{d}}$ is $\un{\alpha}$-unstable, there exists a subchain $\cF_\bullet \subset \cE_\bullet$ with $\mu_{0}(\cF_\bullet) > \mu_{0}(\cE_\bullet)$, but then we also have $\mu_{t}(\cF_\bullet) > \mu_{t}(\cE_\bullet)$ for small enough $t$.

Similarly, if $\cE_\bullet$ is $\un{\alpha}$-semistable, but $\un{\alpha}_t$ unstable for some $0<t<\epsilon$. Then $\cE_\bullet$ admits a canonical Harder-Narasimhan filtration $\cF_\bullet^1 \subset \dots \subset \cF_\bullet^h=\cE_\bullet$ such that the subquotients $\cF^i_\bullet/\cF^{i-1}_\bullet$ are 
$\un{\alpha}_t$-semistable and $\mu_t(\cF^i_\bullet/\cF^{i-1}_\bullet) > \mu_t(\cF^{i+1}_\bullet/\cF^{i}_\bullet)$. By our assumption on $\epsilon$ the same strict inequalities must then hold for every $t\in (0,\epsilon)$. Since $\cE_\bullet$ is $\un{\alpha}$-semistable, continuity implies that $\mu_0(\cF_\bullet^1)\geq \mu_0(\cF^i_\bullet) \geq \mu_0(\cE_\bullet)$ , so that all subquotients must be $\alpha_0$-semistable of slope $\mu_0(\cE_\bullet)$.

In particular, if $\cE_\bullet^\prime$ is any chain that lies in the same $\un{\alpha}_t$--Harder--Narasimhan stratum as $\cE_\bullet$, then the same numerical conditions hold. Thus, $\cE_\bullet^\prime$  will also be an extension of $\un{\alpha}_0$-semistable chains, so $\cE_\bullet^\prime$ is $\un{\alpha}$--semistable. Thus the whole $\un{\alpha}_t$--Harder--Narasimhan stratum must lie in $\Chain_{\un{n}}^{\un{d},\alpha_0-ss}$.

Since $\Chain_{\un{n}}^{\un{d},\alpha_0-ss}$ is a stack of finite type (\cite{AGS},\cite{STree}), the complement  $$\Chain_{\un{n}}^{\un{d},\alpha_0-\sst}-\Chain_{\un{n}}^{\un{d},\alpha_t-\sst}$$ can only contain finitely many non-empty Harder-Narasimhan strata.
\end{proof}

\begin{remark}
In the next section we will find a result (Corollary \ref{finite}) which allows to bound the number of
Harder--Narasimhan strata occuring in the complement $\Chain_{\un{n}}^{\un{d},\alpha_0-\sst}-\Chain_{\un{n}}^{\un{d},\alpha_t-\sst}$ effectively.
\end{remark}

We will apply the above proposition as follows: By \cite[Proposition 4.8]{HGS} we know that for stability parameters satisfying ($\star$) the class of a Harder-Narasimhan stratum in $\K0hat$
can be expressed as a product of clases of stacks of semistable chains of smaller rank times a multiple of $\bL$. Therefore, to compute the class of $\Chain_{\un{n}}^{\un{d},\un{\alpha}-\sst}$ for any $\alpha$ satisfying ($\star$), it would be sufficient to know on the one hand the class for large stability parameters and to compute inductively classes of spaces of semistable chains
of smaller rank. 


\section{Necessary conditions for the existence of semistable chains}\label{sec:conditions}
In this section we want to collect conditions on $\un{n},\un{d},\un{\alpha}$ which are necessary for the existence of $\un{\alpha}$-semistable chains of rank $\un{n}$ and degree $\un{d}$. Some of these conditions are well known, i.e., conditions (1) appears in \cite[Proposition 2.4]{AGS},  but we could not find the other conditions in the literature. 

\begin{proposition}\label{conditions}
Fix $\un{n}=(n_0,\dots,n_r)\in \bN^{r+1}$, $\un{d}=(d_0,\dots,d_r)\in \bZ^{r+1}$ and $\un{\alpha}=(\alpha_r,\dots,\alpha_0)\in\bR^{r+1}$ a stability parameter satisfying $\alpha_r>\alpha_{r-1}>\dots>\alpha_0$. 
Write $$\mu_\bullet:=\frac{\sum_{i=0}^r (d_i + \alpha_i n_i)}{\sum_{i=0}^r{n_i}}.$$
An $\alpha$-semistable chain $\cE_\bullet$ of rank $\un{n}$ and degree $\un{d}$ can only exist if
\begin{enumerate}
\item For all $j\in \{0,\dots,r-1\}$ we have 
$$ \frac{\sum_{i=0}^j \big(d_i + \alpha_i n_i\big)}{\sum_{i=0}^j n_i} \leq \mu_\bullet$$ 
\item For all $j$ such that $n_j=n_{j-1}$ we have $$\frac{d_j}{n_j}\leq \frac{d_{j-1}}{n_{j-1}}.$$ 
\item For all $0\leq k < j \leq r$ such that $n_j <\min\{n_{k},\dots,n_{j-1}\}$ we have:
$$\displaystyle{\frac{\sum_{i\not\in[k,j]}\big(d_i+\alpha_i n_i \big)+(j-k+1)d_j+\big(\sum_{i=k}^j\alpha_i\big)n_j}{\sum_{i\not\in[k,j]}n_i+(j-k+1)n_j}\leq \mu_\bullet}, \textrm{ i.e.,}$$
$$ \mu_{\un{\alpha}}(\cE_r \to \dots \to \cE_{j+1} \to \cE_j \to \dots \to \cE_j \to \cE_{k-1} \to \dots \to \cE_0) \leq \mu_\bullet.$$
\item For all $0\leq k < j \leq r$ such that $n_j > \max\{n_{k},\dots,n_{j-1}\}$ we have:
$$\frac{\sum_{i=k+1}^{j} \big( d_i-d_k +\alpha_i(n_i-n_k) \big)}{\sum_{i=k+1}^j(n_i-n_k)} \leq \mu_\bullet, \textrm{ i.e.,}$$ 
$$\mu_\bullet \leq \mu_{\un{\alpha}}(\cE_r \to \dots \to \cE_{j+1} \to \cE_k \to \dots \to \cE_k \to \cE_{k-1} \to \dots \to \cE_0).$$
\end{enumerate}
\end{proposition}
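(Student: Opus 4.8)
The plan is to show that each inequality is forced by testing $\un{\alpha}$-semistability against a cleverly chosen subchain (or quotient chain), exploiting the fact that slopes of subchains are bounded by $\mu_\bullet$. The unifying principle: whenever $\cE_\bullet$ is $\un{\alpha}$-semistable, \emph{every} subchain has $\un{\alpha}$-slope $\leq\mu_\bullet$, and dually every quotient chain has $\un{\alpha}$-slope $\geq\mu_\bullet$; the four conditions arise by producing, for each numerical configuration, a subchain (resp.\ quotient) whose rank and degree are determined by $\un{n},\un{d}$ alone, so that the slope inequality becomes a condition purely on the numerical data.

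For (1), I would take the subchain $\cF_\bullet=(\cE_0\to\cE_1\to\dots\to\cE_j\to 0\to\dots\to 0)$, i.e.\ the truncation keeping the first $j+1$ terms; this is genuinely a subchain because the maps go $\cE_i\to\cE_{i-1}$, so the ``initial segment'' in the indexing $0,\dots,r$ is preserved. Its $\un{\alpha}$-slope is exactly $\frac{\sum_{i=0}^j(d_i+\alpha_i n_i)}{\sum_{i=0}^j n_i}$, and semistability gives the claim. For (2), when $n_j=n_{j-1}$, consider the morphism $\phi_j\colon\cE_j\to\cE_{j-1}$: if it is generically an isomorphism one gets a subsheaf comparison, but the clean argument is to note that the image of $\phi_j$ has rank $\le n_j$ and to build either the subchain supported in degrees $\le j-1$ augmented by $\ker\phi_j$ considerations, or better, to pass to the generic fibre where equality of ranks forces $\det\phi_j\neq 0$ only if $\deg(\cE_j)\le\deg(\cE_{j-1})$ after accounting for the zeros of $\phi_j$; I expect the correct route is to use that $(0\to\dots\to 0\to\cE_j\xrightarrow{\id}\cE_{j-1}\to 0\to\dots)$ together with saturation gives a subchain isomorphic (up to the curve's geometry) forcing $d_j/n_j\le d_{j-1}/n_{j-1}$. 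This small case is where I would be most careful.

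For (3) and (4) the idea is to use the hypothesis on the relative sizes of the ranks to \emph{fold} a segment of the chain onto its smallest (resp.\ largest) term. In (3), since $n_j<\min\{n_k,\dots,n_{j-1}\}$, each $\phi_i$ for $i$ in the relevant range can be composed down so that, generically, $\cE_j$ maps into each $\cE_i$ with $i\in[k,j-1]$; taking saturations one obtains a subchain that looks like $\cE_r\to\dots\to\cE_{j+1}\to\cE_j\to\cE_j\to\dots\to\cE_j\to\cE_{k-1}\to\dots\to\cE_0$ with $j-k+1$ copies of (a rank-$n_j$ subsheaf of generic degree $d_j$), whose $\un{\alpha}$-slope is the left-hand side of (3); semistability finishes it. Condition (4) is the dual statement: when $n_j>\max\{n_k,\dots,n_{j-1}\}$, one instead constructs a \emph{quotient} chain folding the segment onto $\cE_k$ — equivalently applies (3)-type reasoning to the cokernels $\cE_i/\im(\phi_{i+1}\circ\dots)$ — and uses that quotient slopes are $\geq\mu_\bullet$; rewriting the quotient's degree as $\sum_{i=k+1}^j(d_i-d_k)$ and its rank as $\sum_{i=k+1}^j(n_i-n_k)$ gives the displayed inequality.

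\textbf{Main obstacle.} The delicate point is making the ``folding'' subchains and quotient chains in (3) and (4) precise: one must check that the composed maps $\cE_j\to\cE_i$ are nonzero (or handle the zero case separately, where the bound is typically easier), and one must control how saturating a subsheaf can only \emph{increase} its degree — so that the numerical inequality one extracts is still valid in the direction claimed (saturation helps for subchains, and the dual operation of taking the torsion-free quotient helps for (4)). Keeping track of these degree shifts under saturation, and verifying the rank hypotheses guarantee the maps used are generically of the expected rank, is the real content; everything else is the bookkeeping of slopes, which I would not spell out in full.
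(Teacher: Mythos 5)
Your overall strategy coincides with the paper's: test $\un{\alpha}$-semistability against subchains and quotient chains whose numerical invariants can be read off from $\un{n},\un{d}$, with (1) coming from the truncated subchains $0\to\dots\to0\to\cE_j\to\dots\to\cE_0$ and (4) obtained from (3) by passing to the dual chain. Your (1) and (4) are fine. But for (2) and (3) the sketch stops exactly where the content begins. For (2): if $\phi_j$ is injective then $d_j\le d_{j-1}$ holds for trivial sheaf-theoretic reasons and semistability is never used, so the entire content of the condition is the non-injective case, which your ``$\ker\phi_j$ considerations'' do not resolve (and the proposed subchain $0\to\cE_j\xrightarrow{\id}\cE_{j-1}\to0$ does not exist, since there is no identity map between $\cE_j$ and $\cE_{j-1}$). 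The argument actually needed is: if $d_j>d_{j-1}$ then $\phi_j$ cannot be injective; since $n_j=n_{j-1}$ one has $\rk\ker(\phi_j)=\rk\coker(\phi_j)$ while $\deg\ker(\phi_j)>\deg\coker(\phi_j)$, and combining the subchain test at position $j$ (giving $\mu(\ker(\phi_j))+\alpha_j\le\mu_\bullet$) with the quotient-chain test at position $j-1$ (giving $\mu(\coker(\phi_j))+\alpha_{j-1}\ge\mu_\bullet$) and the hypothesis $\alpha_j>\alpha_{j-1}$ yields $\mu_\bullet\ge\mu(\ker(\phi_j))+\alpha_j>\mu(\coker(\phi_j))+\alpha_{j-1}\ge\mu_\bullet$, a contradiction. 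Both a subchain and a quotient are required, and the ordering of the $\alpha_i$ enters essentially; none of this is in your sketch.

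For (3) the gap is more serious. The hypothesis $n_j<\min\{n_k,\dots,n_{j-1}\}$ does \emph{not} make the composites $\phi_{i+1}\circ\dots\circ\phi_j\colon\cE_j\to\cE_i$ injective (they can even vanish), so the folded subchain of images generally has ranks strictly below $n_j$ and degrees unrelated to $d_j$; its slope is then \emph{not} the left-hand side of (3), and saturation does not repair the rank drop. The case you defer as ``typically easier'' is where essentially all of the paper's proof of (3) takes place. What is done there: the left-hand side equals the $\un{\alpha}$-slope of the formal chain whose term in each folded position is $\im(\phi_{i+1,j})\oplus\ker(\phi_{i+1,j})$, which has rank $n_j$ and degree $d_j$ by construction; since the slope of a direct sum is a convex combination of the slopes of the summands, if $\LHS>\mu_\bullet$ then either the genuine image subchain $\cE'_\bullet$ already has slope $\ge\LHS>\mu_\bullet$ and destabilizes, or some kernel term exceeds $\LHS$, and then choosing the first such index $i_0$ and using the exact sequence $0\to\ker(\phi_{i_0+1,j})\to\ker(\phi_{i_0,j})\to\phi_{i_0+1,j}(\ker(\phi_{i_0,j}))\to0$ one shows that $\phi_{i_0+1,j}(\ker(\phi_{i_0,j}))$, viewed as a subchain concentrated in a single position, has $\un{\alpha}$-slope $>\mu_\bullet$. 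Without this convexity-plus-kernel-extraction step your argument proves (3) only under the unjustified additional assumption that all the relevant composites are injective.
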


\begin{proof}
The inequalities (1) simply state that every chain contains the obvious subchains $0\to \dots \to 0 \to \cE_j \to \cE_{j-1} \to \dots \to \cE_0$, so that the slope of this subchain has to be $\leq \mu_\bullet$.

To see (2) suppose that $n_j=n_{j-1}$ and $d_j>d_{j-1}$. If this happens the map $\cE_j\to\cE_{j-1}$ cannot be injective,
so that we find a subchain
\begin{center}
\begin{tabular}{rp{5em}cp{5em}l}
$\cdots\to$& $\ker(\phi_j)$ & $\to$ & $ 0 $&$\to \cdots$\\
\end{tabular}
\end{center}
and a quotient
\begin{center}
\begin{tabular}{rp{5em}cp{5em}l}
$\cdots\to$ & $ 0$ &$ \to$ & $\coker(\phi_{j})$ & $\to \cdots.$\\
\end{tabular}
\end{center}
 We know that $\rk(\ker(\phi_j))=\rk(\coker(\phi_j))$ and $\deg(\ker(\phi_j)) > \deg(\coker(\phi_j))$. However, if $\cE_\bullet$ was semi-stable we would have 
$$\mu_\bullet \geq \mu_{\un{\alpha}}(\ker(\phi_j)) = \mu(\ker(\phi_j))+\alpha_j > \mu(\coker(\phi_j)) + \alpha_{j-1} = \mu_{\un{\alpha}}(\coker(\phi_j)) \geq \mu_\bullet,$$
which cannot happen.

Condition (3) will follow by considering the subchain 
$$\cE_r \to \dots \cE_j \to \im(\phi_j) \to \im(\phi_{j-1}\circ\phi_j) \dots \to \cE_{k-1} \to \dots \to \cE_0.$$
To clarify the argument, let us first assume that $k=j-1$. We have subchains:
$$\xymatrix{
\cK_\bullet:=&\cdots\ar[r] & 0\ar[r]\ar[d]  &\ker(\phi_j) \ar[r]\ar[d] & 0 \ar[r]\ar[d]  & 0 \ar[r]\ar[d] &\cdots\\
\cE^\prime_\bullet:=&\cdots\ar[r] &\cE_{j+1}\ar[r]\ar[d]  & \cE_j \ar[r]\ar[d]  & \im(\phi_j)\ar[r]\ar[d]  & \cE_{j-2}\ar[d]\ar[r] &\cdots  \\
\cE_\bullet= &\cdots\ar[r] &\cE_{j+1}\ar[r]  & \cE_j \ar[r]  & \cE_{j-1}\ar[r]  & \cE_{j-2}\ar[r] &\cdots
}$$
If $\ker(\phi_j)=0$ then the slope of the subchain defined using  $\im(\cE_j)$ is the expression given in (3).
Otherwise, the expression equals 
$$\mu_{\un{\alpha}}(\cE_\bullet^\prime\oplus \cK_{\bullet-1})= \frac{\rk(\cE_\bullet^\prime)}{\rk(\cE_\bullet^\prime)+\rk(\cK_\bullet)} \mu_{\un{\alpha}}(\cE_\bullet^\prime) 
+ \frac{\rk(\cK_\bullet)}{\rk(\cE_\bullet^\prime)+\rk(\cK_\bullet)} (\mu_{\un{\alpha}}(\cK_\bullet)-(\alpha_j-\alpha_{j-1}).$$
If the left hand side of this expression is $>\mu_\bullet$ then we find that either $\mu_{\un{\alpha}}(\cE_\bullet^\prime)>\mu_\bullet$ or $\mu_{\un{\alpha}}(\cK_\bullet)>\mu_\bullet+(\alpha_j-\alpha_{j-1})>\mu_\bullet$. This proves the necessity of the inequality for $k=j-1$.

In general fix $k$ and denote by $\LHS$ the left hand side of the inequality in (3) and suppose that $\LHS>\mu_\bullet$.  Let us abbreviate $\phi_{k,j}:=\phi_{k}\circ \dots \circ \phi_{j-1} \circ \phi_j$.

We have a subchain 
$$\cE_\bullet^\prime:= \cdots \cE_{j+1} \to \cE_j \to \im(\phi_j) \to \im(\phi_{j-1,j}) \to \dots \to \im(\phi_{k+1,j}) \to \cE_{k-1} \to \dots.$$
As before, if $\ker(\phi_{k+1,j})=0$ then $\cE^\prime_\bullet$ is the chain
$$ \dots \cE_{j+1} \to \cE_j \map{\id} \cE_j \map{\id} \dots \map{\id} \cE_j \map{\phi_{k,j}} \cE_{k-1} \to \dots $$
so that condition (3) is given by $\LHS = \mu_{\un{\alpha}}(\cE_\bullet^\prime)\leq \mu_\bullet$.

If the kernels are non-zero then we have 
$$\LHS = \mu_{\un{\alpha}}(\to \cE_j \to \im(\phi_j)\oplus \ker(\phi_j) \to \im(\phi_{j-1,j}) \oplus \ker(\phi_{j-1,j}) \to \dots ).$$
Since the slope of a sum of chains is a convex combination of the summands either $\mu_{\un{\alpha}}(\cE_\bullet^\prime)\geq\LHS$ or there exists an $i_0$ with $j\leq i_0<k$ such that $\mu(\ker(\phi_{i_0,j}))+\alpha_{i_0}>\LHS$ and $\mu(\ker(\phi_{l,j}))+\alpha_l\leq \LHS$ for all $l>i_0$.
If the latter happens, consider the exact sequence 
$$ 0 \to \ker(\phi_{i_0+1,j}) \to \ker(\phi_{i_0,j}) \to \phi_{i_0+1,j}(\ker(\phi_{i_0,j})) \to 0.$$
We know that $\mu(\ker(\phi_{i_0,j}))+\alpha_{i_0+1} > \mu(\ker(\phi_{i_0,j}))+\alpha_{i_0}>\LHS$ and $\mu(\ker(\phi_{i_0+1,j}))+\alpha_{i_0+1}<\LHS$.
Therefore $\mu(\phi_{i_0+1,j}(\ker(\phi_{i_0,j})))+\alpha_{i_0+1} > \LHS > \mu_\bullet$, so that the subchain 
$$0 \to \phi_{i_0+1,j}(\ker(\phi_{i_0,j})) \to 0$$
 is a destabilizing subchain of $\cE_\bullet$.
This proves that condition (3) is necessary.

The last condition (4) follows from (3) by passing to the dual chain.
\end{proof}

\begin{corollary}\label{finite}
For fixed values of $\un{n}\in \bN^{r+1}$ and $\un{\alpha}\in\bR^{r+1}$ satisfying $\alpha_r<\alpha_{r-1}<\dots < \alpha_0$ and $d\in \bZ$ there are only finitely many values $\un{d} \in \bZ$ with $\sum d_i=d$ which satisfy the conditions given in Proposition \ref{conditions}. 

In particular, there exist only finitely many $\un{d}\in\bZ^{r+1}$ with $\sum d_i=d$ such that $\Chain_{\un{n}}^{\un{d},\un{\alpha}-\sst}\neq \emptyset$.
\end{corollary}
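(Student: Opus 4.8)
The plan is to use the inequalities of Proposition \ref{conditions} to confine the partial degree sums $D_j := \sum_{i=0}^j d_i$ to a bounded range, and then to observe that the $d_i$ themselves are determined by these partial sums, so boundedness of all $D_j$ gives finiteness of the set of admissible $\un{d}$. Since $\sum_i n_i = n$ and $\sum_i d_i = d$ are fixed, it suffices to bound $D_j$ for $j = 0, \dots, r-1$ (the value $D_r = d$ is fixed), and then to exhibit a matching lower bound, so that each $D_j$ ranges over a finite set.

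First I would extract an upper bound on each $D_j$ from condition (1). Rewriting (1), the inequality $\frac{\sum_{i=0}^j (d_i + \alpha_i n_i)}{\sum_{i=0}^j n_i} \leq \mu_\bullet$ reads $D_j \leq \big(\sum_{i=0}^j n_i\big)\mu_\bullet - \sum_{i=0}^j \alpha_i n_i$; since $\un{n}$, $\un{\alpha}$ and $\mu_\bullet$ (which depends only on $\un n,\un\alpha$ and $d$) are all fixed, the right-hand side is an explicit constant, call it $U_j$. This gives $D_j \leq U_j$ for $j = 0, \dots, r-1$.

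The harder part is the lower bound on $D_j$, and this is where conditions (3) and (4) (or (2) in the degenerate case $n_j = n_{j-1}$) come in — condition (1) alone does not bound $D_j$ from below. The idea is that if some partial sum $D_j$ is very negative, then either $\cE_0 \to \cdots \to \cE_j$ already contains too-positive a quotient forcing a contradiction with semistability via a sub-chain of the type appearing in (3) or (4), or — more directly — one can iterate: fixing the maximal index $j$ with $n_j$ minimal among $n_0,\dots,n_r$ and applying (3) with $k$ running over the indices to its left bounds $d_j$ from below in terms of the other degrees, and symmetrically (4) bounds $d_j$ from below when $n_j$ is maximal; combining with the upper bounds on the $D_i$ already obtained and with $\sum d_i = d$ then forces a lower bound on every $D_j$. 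Concretely one argues by downward induction on $j$: having bounded $D_{j+1}, \dots, D_{r-1}$ (and knowing $D_r=d$) both above and below, the relation $d_{j+1} = D_{j+1} - D_j$ together with the appropriate instance of (2), (3) or (4) at the step $j\to j+1$ pins $D_j$ into a bounded interval. The main obstacle is organizing this induction so that in every case — $n_{j} < n_{j-1}$, $n_j > n_{j-1}$, or $n_j = n_{j-1}$ — one of conditions (2), (3), (4) supplies the needed lower estimate uniformly; once that bookkeeping is done, each $D_j$ lies in a finite set, hence so does $\un{d}$.

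Finally, the last sentence of the corollary is immediate: if $\Chain_{\un{n}}^{\un{d},\un{\alpha}-\sst} \neq \emptyset$ then by definition it contains an $\un{\alpha}$-semistable chain, so $\un{d}$ satisfies all the conditions of Proposition \ref{conditions}, and there are only finitely many such $\un{d}$ by what was just shown.
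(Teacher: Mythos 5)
Your overall plan --- reduce the corollary to showing that the linear inequalities of Proposition \ref{conditions} confine $\un{d}$ to a bounded subset of the affine hyperplane $\sum d_i = d$ --- is the right one and is in substance what the paper does. But the central step of your argument contains a genuine gap, and the directions you attribute to conditions (3) and (4) are reversed. Taking $k=j-1$ in condition (3) (the case $n_j<n_{j-1}$), the numerator equals $\sum_i(d_i+\alpha_i n_i)+(d_j-d_{j-1})+\alpha_{j-1}(n_j-n_{j-1})$ and the denominator is $\sum_i n_i+(n_j-n_{j-1})$, so the inequality rearranges to the \emph{upper} bound $d_j-d_{j-1}\leq(\mu_\bullet-\alpha_{j-1})(n_j-n_{j-1})$; likewise condition (4) with $k=j-1$ gives $d_j-d_{j-1}\leq(\mu_\bullet-\alpha_j)(n_j-n_{j-1})$ and condition (2) gives $d_j-d_{j-1}\leq 0$. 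None of these ``bounds $d_j$ from below in terms of the other degrees'' as you claim; a lower bound on any $D_j$ only appears after all of these one-sided bounds are played off against the fixed total degree. That combination is exactly the step you defer as ``bookkeeping'', and it is the actual mathematical content of the proof, so as written nothing has been established.

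The missing step is short once the directions are straightened out. Condition (1) with $j=0$ gives $d_0\leq n_0(\mu_\bullet-\alpha_0)=:U_0$; for each $j\geq 1$ exactly one of (2), (3), (4) (with $k=j-1$) applies and gives $d_j\leq d_{j-1}+c_j$, hence $d_j\leq d_0+\sum_{i\leq j}c_i$; summing over $j$ and using $\sum_j d_j=d$ yields $d\leq(r+1)d_0+\mathrm{const}$, i.e.\ a lower bound on $d_0$, and then inductively every $d_j$ is pinned into a bounded interval. This is precisely the paper's argument in disguise: the paper exhibits a vanishing positive combination $\sum_i(r+1-i)\overline{v}_i+(r+1)\overline{w}_1=0$ of the projected normal vectors of the half-spaces, whose support spans the hyperplane $\sum d_i=0$ --- the standard criterion for the polytope cut out by those half-spaces to be compact --- and the normals used are exactly the one from condition (1) at $j=0$ and the consecutive-difference conditions above. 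Your concluding paragraph deducing the ``in particular'' clause is fine.
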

\begin{proof}
The conditions given in Proposition \ref{conditions} are linear inequalities in $\un{d}$. Thus we only need to check that these cut out a compact polytope in the hyperplane $\sum d_i = d$.  For $i=1,\dots r$ let us denote by $w_i$ the vector $(\underbrace{1,\dots,1}_{i-\text{times}}, 0,\dots,0)\in \bR^{r+1}$, and by $\langle,\rangle$ the standard inner product on $\bR^{r+1}$, so that  condition (1) can be written as 
$$ \langle \un{d}, w_i \rangle \geq c_i$$
where $c_i$ is an explicit constant depending only on $i,\un{n},\un{\alpha}$ and $d=\sum d_i$.
Also for $i=1,\dots r$ if $n_i=n_{i-1}$ we put $v_i:=(0,\dots,0,-1,1,0,\dots,0)$, where the $(i-1)$-th coordinate is $-1$ and the $i$-th coordinate is $1$.
if $n_i<n_{i-1}$ we put $v_i:=(1,\dots,1,0,2,1,\dots,1)$ where the $(i-1)$-th coordinate is $0$ and the $i$-th coordinate is $2$ and if $n_i>n_{i-1}$ we put
$v_i:=(-1,\dots,-1,-2,0,-1,\dots,-1)$ here the $(i-1)$-th coordinate is $-2$ and the $i$-th coordinate is $0$.

With this definition the conditions (2),(3) and (4) are of the form
$$ \langle \un{d}, \un{r}_i \rangle \geq c^\prime_i$$
where again $c_i^\prime$ is an explicit constant depending only on $i,\un{n},\un{\alpha}$ and $d=\sum d_i$.

To show that the these conditions cut out a compact polytope in the hyperplane $\sum d_i = d$ we only need to check that the projections $\overline{v}_i,\overline{w}_i$ of $v_i,w_i$ onto the hyperplane $\sum d_i =0$ satisfy a linear relation $\sum a_i v_i +\sum b_i w_i =0$ with non negative coefficients, such that the vectors occurring with non zero coefficients span the hyperplane.

However the projection of $v_i$ is $\overline{v}_i=(0,\dots,0,-1,1,0,\dots,0)$ for all $i$ so these vectors are a basis for the hyperplane $\sum d_i=0$ and
$\overline{w}_1=\frac{1}{r+1}(r,-1,\dots,-1)$. Thus $\sum (r+1-i) \overline{v}_i + (r+1) \overline{w}_1 =0$, which proves our claim.
\end{proof}

\begin{remark}
It would be interesting to see whether these conditions might be sufficient conditions for the existence of semistable chains for parameters $\alpha$ satisfying ($\star$).
\end{remark}

\section{Applications to classes of moduli spaces of chains and Higgs bundles}\label{sec:application}

Next we want to apply our necessary conditions to find --- for any rank $\un{n}$ and degree $\un{d}$ and any parameter $\un{\alpha}$ satisfying ($\star$) --- a family of stability conditions $\un{\alpha}_t= \un{\alpha} +t\delta$, such that for all $t\geq 0$ the parameter $\un{\alpha}_t$ satisfies ($\star$) and such that we can compute $\Chain_{\un{n}}^{\un{d},\alpha_t-\sst}$ for $t\gg 0$. 

Let us first assume that $n_i\neq n_j$ for some $i,j$. Then we can even find such a family $\un{\alpha}_t$ such that the moduli space is empty for $t\gg 0$:
\begin{lemma}\label{goodalpha}
Fix $\un{n}\in \bN^{r+1},\un{d}\in \bZ^{r+1}$ together with a stability parameter $\alpha\in \bR^{r+1}$ satisfying ($\star$). Suppose $n_r\neq n_i$ for some $0\leq i<r$.
\begin{enumerate}
\item If $n_r=n_{r-1}=\dots =n_{k+1} < n_{k}$ for some $k$, define $\delta=(\delta_i)$ as $\delta_i:=1$ for $i> k$ and $\delta_i:=0$ for $i\leq k$.
\item If $n_r=n_{r-1}=\dots=n_{k+1} > n_{k}$ define $\delta=(\delta_i)$ as $\delta_i:=0$ for $i>k$ and $\delta_i:=-1$ for $i\leq k$.
\end{enumerate}
Let $\alpha_t:=\alpha+t\delta$. Then $\alpha_t$ satisfies ($\star$) for all $t\geq 0$ and $\Chain_{\un{n}}^{\un{d},\alpha_t-\sst}=\emptyset$ for $t \gg 0$.
\end{lemma}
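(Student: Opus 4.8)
The plan is to verify the two assertions of the lemma separately: that $\alpha_t$ lies in the region $(\star)$ for every $t\ge 0$, and that $\Chain_{\un{n}}^{\un{d},\alpha_t-\sst}=\emptyset$ once $t$ is large. Note first that the two cases are exhaustive for the $k$ chosen: if $k+1$ is taken to be the smallest index with $n_{k+1}=n_{k+2}=\dots=n_r$, then the hypothesis $n_r\ne n_i$ for some $i<r$ forces $k\ge 0$ and $n_k\ne n_{k+1}$, so exactly one of $n_k>n_{k+1}$ (case (1)) or $n_k<n_{k+1}$ (case (2)) occurs.

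For the first assertion I would simply observe that in either case $\delta$ is a non-decreasing vector whose only jump is between the indices $k$ and $k+1$, where $\delta_{k+1}-\delta_k=1$. Hence $\delta_i-\delta_{i-1}\in\{0,1\}$ for all $i=1,\dots,r$, so
$$(\alpha_t)_i-(\alpha_t)_{i-1}=(\alpha_i-\alpha_{i-1})+t(\delta_i-\delta_{i-1})\ \ge\ \alpha_i-\alpha_{i-1}\ \ge\ 2g-2$$
for every $t\ge 0$, which is condition $(\star)$ for $\alpha_t$.

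For the emptiness I would play one of the necessary conditions of Proposition \ref{conditions}, applied to the parameter $\alpha_t$, against the $\alpha_t$-slope $\mu_\bullet$ of the whole chain, in both cases using the proposition's index pair $(k,j)=(k,k+1)$. In case (1) the hypothesis of condition (3) for this pair is exactly $n_{k+1}<n_k$, so every $\alpha_t$-semistable chain of rank $\un{n}$ and degree $\un{d}$ must satisfy
$$\frac{\sum_{i\ne k,k+1}\big(d_i+(\alpha_t)_i n_i\big)+2d_{k+1}+\big((\alpha_t)_k+(\alpha_t)_{k+1}\big)n_{k+1}}{\sum_{i\ne k,k+1}n_i+2n_{k+1}}\ \le\ \mu_\bullet .$$
Writing $n:=\sum_i n_i$, counting $t$-coefficients shows that the left-hand side equals $\frac{A+t(r-k)n_r}{n-n_k+n_{k+1}}$ and that $\mu_\bullet=\frac{B+t(r-k)n_r}{n}$ for suitable constants $A,B$; since $n-n_k+n_{k+1}<n$ the left-hand side grows strictly faster in $t$, so the inequality fails for $t\gg 0$ and $\Chain_{\un{n}}^{\un{d},\alpha_t-\sst}$ is empty. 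Case (2) is the dual situation: now the hypothesis of condition (4) for the pair $(k,k+1)$ is $n_{k+1}>n_k$, and because $\delta_{k+1}=0$ the corresponding necessary inequality reads
$$\frac{d_{k+1}-d_k+\alpha_{k+1}(n_{k+1}-n_k)}{n_{k+1}-n_k}\ \le\ \mu_\bullet ,$$
whose left-hand side is independent of $t$, while $\mu_\bullet=\frac{B'-t\sum_{i\le k}n_i}{n}\to-\infty$; so again the inequality fails for $t\gg 0$.

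The bookkeeping with these $t$-coefficients is routine; the one point that genuinely must be checked --- and the reason the hypothesis $n_r\ne n_i$ enters --- is the strict inequality $n-n_k+n_{k+1}<n$ in case (1) (equivalently, the strictly negative $t$-slope $-\sum_{i\le k}n_i$ of $\mu_\bullet$ in case (2)), which is just $n_k\ne n_{k+1}$: if these ranks coincided, the destabilizing expression and $\mu_\bullet$ would change at the same rate in $t$ and no contradiction would arise. Geometrically, in case (1) the left-hand side above is the $\alpha_t$-slope of a subchain of $\cE_\bullet$ that agrees with $\cE_r,\dots,\cE_{k+1}$ in the top degrees and has rank $\le n_{k+1}<n_k$ in degree $k$, so it has total rank strictly less than $\cE_\bullet$; pushing up the weights $\alpha_i$ for $i>k$ while fixing those for $i\le k$ makes such a low-rank subchain destabilizing, and case (2) is dual. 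Thus the only real decision in the argument is to pick $\delta$, and with it the index pair, so that exactly the right condition of Proposition \ref{conditions} degenerates.
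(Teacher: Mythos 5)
Your proposal is correct and follows essentially the same route as the paper: verify ($\star$) from the monotonicity of $\delta$, then apply condition (3) (resp.\ (4)) of Proposition \ref{conditions} with the index pair $(k,k+1)$ and let $t\to\infty$. The only cosmetic difference is that in case (1) you compare the $t$-linear growth rates of the two sides directly, whereas the paper rearranges the inequality so that one side is constant in $t$; these are equivalent.
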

\begin{proof}
The proof is an easy application of the necessary conditions found in Proposition \ref{conditions}: In both cases $\un{\alpha}_t$ satisfies ($\star$) for $t\geq 0$, because $\un{\alpha}$ was assumed to satisfy ($\star$) and $\delta_i\geq \delta_{i-1}$ for all $i$.

In case (2) we  defined $\delta$ to have negative coefficients, so that 
$$\lim_{t\to \infty} \mu_t(\un{n},\un{d}) = \lim_{t\to \infty} \frac{\sum_{i=0}^r (d_i + (\alpha_i+t\delta_i) n_i)}{\sum_{i=0}^r{n_i}} = -\infty.$$ On the other  condition (4) of Proposition \ref{conditions} applied to $j=k+1$ gives
$$ \frac{d_{k+1}-d_k +(\alpha_{k+1}+t\delta_{k+1})(n_{k+1}-n_k)}{n_{k+1}-n_k)} \leq \mu_{t}(\un{n},\un{d})$$
where the left hand side is independent of $t$ because $\delta_{k+1}=0$. Thus for $t\gg 0$ this condition cannot be satisfied.

In case (1) the dual argument gives the result: Here
$$\lim_{t\to \infty} \mu_t(\un{n},\un{d}) = \lim_{t\to \infty} \frac{\sum_{i=0}^r (d_i + (\alpha_i+t\delta_i) n_i)}{\sum_{i=0}^r{n_i}} = \infty.$$ 
And for $j=k+1$ condition (3) of Proposition \ref{conditions} is equivalent to 
$$\mu_t(\un{n},\un{d})) \leq \frac{d_k-d_{k+1}}{n_k-n_{k+1}}+\alpha_{k}+t\delta_k.$$ 
Since $\delta_k=0$ the right hand side is independent of $t$, so that the condition cannot hold for $t\gg 0$.  
\end{proof}

If $\un{n}=(n,n,\dots,n)$ we already gave an example for large stability parameters in \cite[Corollary 6.10]{HGS}, which we can use:
\begin{lemma}\label{recursnn}
Let $\un{n}=(n,\dots,n)$ and let $\alpha$ satisfy ($\star$). Let $\delta_i:=i$ and put $\un{\alpha}_t:=\un{\alpha} + t\un{\delta}$. 

Then the parameter $\alpha_t$ satisfies ($\star$) for all $t\geq 0$ and for $t\gg 0$ the parameter $\alpha_t$ satisfies $ \alpha_i-\alpha_{i-1} > d_i-d_{i-1}.$
For such $t$ we have:
\begin{align*}
[\Chain_{\un{n}}^{\un{d},\alpha_t-\sst}] &= [\Bun_n^{d_0}] \prod_{i=1}^r [\Sym^{d_{i-1}-d_i}(C\times \bP^{n-1})]\\
                       &-\bigg(\sum_{\un{m},\un{e},\un{k}} \bL^{\sum_{k<j} \chi_{kj}} \prod_i [\Chain_{\un{m}_j}^{\un{e_j},\alpha_t-\sst}]\bigg),
\end{align*}
where the sum runs over all partitions $n=\sum_{j=1}^l m_j$, $\un{d}=\sum_j \un{e}^{(j)}$ such that for all $i,j$ we have $e_i^{(j)} \leq e_{i-1}^{(j)}$ and for $\mu^{(j)}:=\frac{\sum_i e_i^{(j)}}{rm_j}$ we have $\mu_1 > \dots > \mu_l$.
We have written $\chi_{kj}= m_jm_k(g-1)+ \sum_{i=0}^r (m_ke^{(j)}_i-m_je^{(k)}_i)- \sum_{i=1}^r (m_ke^{j}_i -m_je^{(k)}_{i-1})$.
\end{lemma}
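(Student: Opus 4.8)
The strategy is to combine the wall-crossing description of Proposition \ref{walls} with the explicit large-parameter model of \cite[Corollary 6.10]{HGS}. First I would verify that $\alpha_t$ satisfies ($\star$) for all $t\geq 0$: since $\delta_i-\delta_{i-1}=i-(i-1)=1>0$, the differences $\alpha_{i,t}-\alpha_{i-1,t}=(\alpha_i-\alpha_{i-1})+t$ only increase with $t$, so ($\star$) is preserved. Next I would check that for $t$ large enough one has $\alpha_{i,t}-\alpha_{i-1,t}>d_i-d_{i-1}$ for all $i$ (only finitely many linear conditions, each eventually satisfied since the left side grows linearly in $t$ while the right side is fixed). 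For such $t$, \cite[Corollary 6.10]{HGS} identifies the \emph{stable} locus and, more importantly for us, gives that $\Chain_{\un{n}}^{\un{d},\alpha_t-\sst}$ fibers over $\Bun_n^{d_0}$ with the maps $\phi_i$ playing the role of sections of $\cE_{i-1}\otimes\cE_i^\vee$ twisted appropriately; because the relevant cohomology of the twist vanishes in the chosen range, the space of all chains (with all $\phi_i$ allowed) has class $[\Bun_n^{d_0}]\prod_i[\Sym^{d_{i-1}-d_i}(C\times\bP^{n-1})]$ — this is the first term on the right-hand side.

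The remaining point is that, for this large $t$, the locus of all chains and the locus of $\alpha_t$-semistable chains differ exactly by the Harder--Narasimhan strata whose subquotients are $\alpha_t$-semistable chains of rank $(m_j,\dots,m_j)$ with strictly decreasing slopes. I would enumerate these strata: an HN filtration of a chain of constant rank $\un n=(n,\dots,n)$ has subquotients that are again chains of constant rank (since the rank vector of a subchain of a constant-rank chain, together with the decreasing-slope condition forced by ($\star$) via condition (2) of Proposition \ref{conditions}, constrains the $e_i^{(j)}$ to satisfy $e_i^{(j)}\leq e_{i-1}^{(j)}$), so the data is precisely a partition $n=\sum_j m_j$, a decomposition $\un d=\sum_j\un e^{(j)}$ with $e_i^{(j)}\leq e_{i-1}^{(j)}$, and ordering $\mu_1>\dots>\mu_l$. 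By \cite[Proposition 4.8]{HGS} applied with the parameter $\alpha_t$ (which satisfies ($\star$)), the class of each such stratum is $\bL^{\sum_{k<j}\chi_{kj}}\prod_j[\Chain_{\un m_j}^{\un e^{(j)},\alpha_t-\sst}]$, where $\chi_{kj}$ is the explicit Euler-characteristic exponent from that proposition; I would record the formula for $\chi_{kj}$ by specializing the general expression of \cite[Proposition 4.8]{HGS} to constant rank vectors, which yields the stated $\chi_{kj}=m_jm_k(g-1)+\sum_{i=0}^r(m_ke_i^{(j)}-m_je_i^{(k)})-\sum_{i=1}^r(m_ke_i^{(j)}-m_je_{i-1}^{(k)})$. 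Subtracting the total class of these strata from $[\Bun_n^{d_0}]\prod_i[\Sym^{d_{i-1}-d_i}(C\times\bP^{n-1})]$ gives the claimed formula.

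\textbf{Main obstacle.} The delicate step is the bookkeeping in the second paragraph: checking that the HN strata of a constant-rank chain for the parameter $\alpha_t$ are \emph{exactly} indexed by the data described (constant-rank subquotients with $e_i^{(j)}\leq e_{i-1}^{(j)}$ and $\mu_1>\dots>\mu_l$), and that no other strata contribute to the complement of the semistable locus inside the space of all chains. For constant rank $\un n$ the space of all chains is itself of finite type (it is a vector-bundle stack over $\Bun_n^{d_0}$), so the set of occurring HN types is finite; the constant-rank condition on subquotients follows because a subchain $\cF_\bullet\subset\cE_\bullet$ with $\rk\cF_i$ not constant would have a jump, and in the range $\alpha_{i,t}-\alpha_{i-1,t}>d_i-d_{i-1}$ condition (2) of Proposition \ref{conditions} (together with the analysis in the proof of Lemma \ref{goodalpha}) rules out the corresponding semistable subquotient — here one must be slightly careful to quote the right inequality. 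The second subtlety is matching the exponent $\chi_{kj}$: one has to track the Euler characteristics $\chi(\cF^j_\bullet,\cF^k_\bullet)$ of Hom-complexes between chains, which contribute the $m_jm_k(g-1)$ term (from $\Hom(\cE_i^{(j)},\cE_i^{(k)})$ on a genus-$g$ curve) and the degree-dependent correction terms from the twisted-by-$\Omega$ shifts in the $\phi_i$; this is routine given \cite[Proposition 4.8]{HGS} but is where sign and indexing errors would creep in. Everything else — the ($\star$) preservation, the eventual validity of $\alpha_{i,t}-\alpha_{i-1,t}>d_i-d_{i-1}$, and the identification of the ambient space of all chains — is straightforward.
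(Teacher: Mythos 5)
The paper does not actually reprove this statement: its entire proof is the observation that the lemma is a restatement of \cite[Corollary 6.10]{HGS}, so what you have written is really an attempted re-derivation of that cited result rather than a parallel of the paper's argument. Your first paragraph is fine as far as it goes: $\un{\alpha}_t$ satisfies ($\star$) for all $t\geq 0$ because $\delta_i-\delta_{i-1}=1>0$, the finitely many inequalities $\alpha_{t,i}-\alpha_{t,i-1}>d_i-d_{i-1}$ eventually hold, and the overall architecture (a finite-type ambient stack minus a sum of Harder--Narasimhan strata whose classes and exponents $\chi_{kj}$ come from \cite[Proposition 4.8]{HGS}) is the right one.

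There is, however, a concrete error in your identification of the ambient space. The class $[\Bun_n^{d_0}]\prod_{i=1}^r[\Sym^{d_{i-1}-d_i}(C\times \bP^{n-1})]$ is \emph{not} the class of the stack of all chains of rank $\un{n}=(n,\dots,n)$ and degree $\un{d}$: that stack maps onto $\prod_i \Bun_n^{d_i}$ and is not of finite type, and it is not a vector-bundle stack over $\Bun_n^{d_0}$ (the dimension of $\Hom(\cE_i,\cE_{i-1})$ jumps, and no cohomology vanishing is available for arbitrary pairs of bundles), so the sentence claiming finiteness of the space of all chains is false. The correct ambient space is the stack of chains in which every $\phi_i$ is \emph{injective}; this is an iterated bundle over $\Bun_n^{d_0}$ whose fibres are Quot schemes of torsion quotients of length $d_{i-1}-d_i$ of a rank-$n$ bundle, and the identity $[\mathrm{Quot}^{\ell}]=[\Sym^{\ell}(C\times\bP^{n-1})]$ is precisely where the symmetric powers of $C\times\bP^{n-1}$ enter. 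Your derivation is therefore missing two steps: (i) that for the chosen large $t$ every $\alpha_t$-semistable chain of constant rank has all maps injective --- this is where the inequality $\alpha_{t,i}-\alpha_{t,i-1}>d_i-d_{i-1}$ is actually used, via the kernel/cokernel subchains as in the proof of condition (2) of Proposition \ref{conditions} --- and (ii) that the complement of the semistable locus inside the injective-maps locus is exactly the union of the listed HN strata with constant-rank subquotients. Without replacing ``all chains'' by ``chains with injective maps'' the first term of the formula is simply the class of the wrong (and in fact ill-behaved) space, and the subtraction does not close up.
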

\begin{proof}
This is a reformulation \cite[Corollary 6.10]{HGS}.
\end{proof}

We can now use these observations to apply the wall-crossing procedure:
\begin{theorem}\label{factorP1}
Let $\un{n}\in\bN^{r+1},\un{d}\in \bZ^{r+1}$ and assume that $\un{\alpha}\in \bR^{r+1}$ satisfies ($\star$), i.e. $\alpha_{i}-\alpha_{i-1}\geq 2g-2$. Then the following hold:
\begin{enumerate}
\item $[\Chain_{\un{n}}^{\un{d},\alpha-\sst}]$ can be expressed in terms of $\bL$ and the symmetric powers $[C^{(i)}]$ of the curve $C$.
\item $[\Chain_{\un{n}}^{\un{d},\alpha-\sst}]$ is divisible by $[\Bun_1^0]$, i.e. there exists a class $[\PChain_{\un{n}}^{\un{d},\alpha-\sst}]\in \K0hat$ such that $$[\Chain_{\un{n}}^{\un{d},\alpha-\sst}]= [\Bun_1^0]\cdot [\PChain_{\un{n}}^{\un{d},\alpha-\sst}].$$
\item If $n_r\neq n_i$ for some $i$, then the class $[\Chain_{\un{n}}^{\un{d},\alpha-\sst}]$ is divisible by $[\Bun_1^0]^2$, i.e. there exists a class $[\PPChain_{\un{n}}^{\un{d},\alpha-\sst}]\in \K0hat$ such that $$[\Chain_{\un{n}}^{\un{d},\alpha-\sst}]= [\Bun_1^0]^2\cdot [\PPChain_{\un{n}}^{\un{d},\alpha-\sst}].$$
\end{enumerate}
\end{theorem}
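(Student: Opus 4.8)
The plan is to prove all three statements simultaneously by induction on the total rank $n=\sum_i n_i$. The base case $n=1$ (so $\un{n}=(1)$) gives $\Chain_{\un{n}}^{\un{d},\alpha-\sst}=\Bun_1^{d_0}$, whose class equals $[\Bun_1^0]$ and is a symmetric-power expression via the Zeta function, so (1) and (2) hold and (3) is vacuous. For the inductive step, fix $\un{n},\un{d},\un{\alpha}$ with $\un{\alpha}$ satisfying ($\star$), and choose a deformation direction $\delta$ and the family $\un{\alpha}_t=\un{\alpha}+t\delta$ depending on whether $\un{n}$ is constant. In the constant-rank case $\un{n}=(n,\dots,n)$ use the $\delta$ of Lemma \ref{recursnn}; otherwise use the $\delta$ of Lemma \ref{goodalpha}. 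In both cases $\un{\alpha}_t$ satisfies ($\star$) for all $t\geq 0$.

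Next I would set up the wall-crossing telescope. By Proposition \ref{walls}(2), as $t$ decreases from a large value down to $0$, passing each critical value $\un{\alpha}^{(c)}$ changes $[\Chain_{\un{n}}^{\un{d},\alpha_t-\sst}]$ by adding the classes of the relevant Harder--Narasimhan strata, and by Corollary \ref{finite} only finitely many degrees $\un{d}^i$ (hence finitely many strata) occur; since each $\un{\alpha}_t$ on the path satisfies ($\star$), \cite[Proposition 4.8]{HGS} applies and each such stratum has class $\bL^{\chi}\prod_{i=1}^{h+1}[\Chain_{\un{n}_i}^{\un{d}_i,\alpha_t-\sst}]$ with every $\un{n}_i$ of total rank strictly less than $n$ (a nontrivial HN filtration has at least two steps). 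Therefore
\[
[\Chain_{\un{n}}^{\un{d},\alpha-\sst}] = [\Chain_{\un{n}}^{\un{d},\alpha_t-\sst}]_{t\gg 0} \;+\; \sum (\text{finitely many terms } \bL^{\chi}\prod_i [\Chain_{\un{n}_i}^{\un{d}_i,\alpha_{t}-\sst}]),
\]
where on the right each stability parameter again satisfies ($\star$) and has smaller total rank, so the induction hypothesis gives (1) and (2) for each factor. For the ``large $t$'' term, in the non-constant case it is $0$ by Lemma \ref{goodalpha}, and in the constant case Lemma \ref{recursnn} expresses it as $[\Bun_n^{d_0}]\prod_i[\Sym^{d_{i-1}-d_i}(C\times\bP^{n-1})]$ minus a sum of products of smaller-rank semistable-chain classes — and $[\Bun_n^{d_0}]$, $[\Sym^\bullet(C\times\bP^{n-1})]$, and symmetric powers of $C$ are all symmetric-power expressions in $\bL$ and the $[C^{(i)}]$ (using $[\Bun_n^{d}]=\bL^{n^2(g-1)}[\Bun_1^0]\prod_{i=1}^{n-1}Z(C,\bL^i)$ and the product formula for $\Sym^\bullet$ of a product). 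This settles (1).

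For divisibility (2): $[\Bun_1^0]$ divides each ingredient of the large-$t$ term — it divides $[\Bun_n^{d_0}]$ by the standard formula above, hence divides the first product, and in the subtracted sum every summand is a product of smaller-rank semistable-chain classes each divisible by $[\Bun_1^0]$ by induction; and in the wall-crossing sum each stratum term $\bL^\chi\prod_i[\Chain_{\un{n}_i}^{\un{d}_i,\cdot}]$ contains at least one factor divisible by $[\Bun_1^0]$ by induction. Dividing through defines $[\PChain_{\un{n}}^{\un{d},\alpha-\sst}]$. For the sharper divisibility (3), assume $n_r\neq n_i$ for some $i$: then Lemma \ref{goodalpha} makes the large-$t$ term vanish, so $[\Chain_{\un{n}}^{\un{d},\alpha-\sst}]$ equals just the wall-crossing sum; each HN stratum appearing is of a type with $\mu_{\un{\alpha}}(\un{n}^i,\un{d}^i)=\mu_{\un{\alpha}}(\un{n},\un{d})$ for all $i$, has $h+1\geq 2$ semistable factors of smaller rank, so the product has at least two factors each divisible by $[\Bun_1^0]$, hence is divisible by $[\Bun_1^0]^2$; summing and dividing defines $[\PPChain_{\un{n}}^{\un{d},\alpha-\sst}]$.

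The main obstacle I anticipate is the bookkeeping that makes the telescoping rigorous: one must argue that moving from $t\gg 0$ down to $t=0$ meets only finitely many walls relevant to rank $\leq\un{n}$ (Proposition \ref{walls}(1) gives local finiteness and Corollary \ref{finite} gives that only finitely many degrees support nonempty semistable loci, but one should check these combine to a genuinely finite process), and that at every intermediate value of $t$ the deformed parameter still satisfies ($\star$) so that \cite[Proposition 4.8]{HGS} is applicable at each wall — this is where the specific choices of $\delta$ with $\delta_i\geq\delta_{i-1}$ in Lemmas \ref{goodalpha} and \ref{recursnn} are used. A secondary point requiring care is that the HN strata arising in Proposition \ref{walls}(2) are strata for $\un{\alpha}_t$ with $t>0$, not for $\un{\alpha}$ itself, so one should confirm the rank decomposition of the type still has all parts of total rank $<n$ and that \cite[Proposition 4.8]{HGS} is being invoked for the correct (($\star$)-satisfying) parameter; everything else is a routine consequence of the induction hypothesis and the explicit formulas.
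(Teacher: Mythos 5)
Your proposal is correct and follows essentially the same route as the paper: induction on the total rank, choosing the deformation $\delta$ via Lemma \ref{goodalpha} or Lemma \ref{recursnn} according to whether $\un{n}$ is constant, identifying the large-$t$ class (zero, or given by the recursion of \cite[Corollary 6.10]{HGS}), and accounting for the difference by finitely many Harder--Narasimhan strata along the path, each a product of at least two smaller-rank semistable classes by \cite[Proposition 4.8]{HGS}, which yields the divisibility statements by the induction hypothesis. The extra bookkeeping you flag (finiteness of the walls met, condition ($\star$) holding at every intermediate parameter, and $h+1\geq 2$ factors in each stratum) is exactly what the paper's proof relies on, just stated more explicitly in your version.
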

\begin{proof}
We only have to combine Proposition \ref{walls}, which describes how the stack of semistable chains changes if one varies the stability parameter and the fact that we can describe $\Chain_{\un{n}}^{\un{d},\un{\alpha-}\sst}$ for large stability parameters (Lemma \ref{goodalpha} and Remark \ref{recursnn}): For any $\un{n},\un{d}$ we can by Lemma \ref{goodalpha} and Lemma \ref{recursnn} find a family $\un{\alpha}_t = \un{\alpha} +t \delta$ of stability parameters satisfying ($\star$), such that for $t \gg 0$ the class of $[\Chain_{\un{n}}^{\un{d},\alpha_t-\sst}]$ is either $0$ or computed by the recursion given in \cite[Corollary 6.10]{HGS}. Since all of the terms appearing in the recursive formula of the expressions given in \cite[Corollary 6.10]{HGS} satisfy the statements (1) and (2) we see that for $t\gg 0$ the classes $[\Chain_{\un{n}}^{\un{d},\alpha_t-\sst}]$ satisfy the claims (1),(2) and (3). 

Proposition \ref{walls} implies that the difference $[\Chain_{\un{n}}^{\un{d},\un{\alpha}_t-\sst}]-[\Chain_{\un{n}}^{\un{d},\un{\alpha}-\sst}]$ is given by an alternating sum of classes of finitely many Harder--Narasimhan strata for some stability parameters $\un{\alpha}_s$ with $0\leq s \leq t$. Since $\un{\alpha}_s$ satisfies  $\alpha_{s,i}-\alpha_{s,i-1}\geq 2g-2$ for all $i$ we can apply \cite[Proposition 4.8]{HGS} to see that the classes of the Harder--Narasimhan strata are given by products $$\bL^\chi \prod_{i=1}^h [\Chain_{\un{n}_i}^{\un{d}_i,\un{\alpha}_s-\sst}],$$
where $\un{n}_i < \un{n}$. By induction on the rank $\un{n}$ we already know that all of these products are divisible by $[\Bun_1^0]^h$ with $h\geq 2$. This implies that the claims (1),(2) and (3) also hold for $[\Chain_{\un{n}}^{\un{d},\un{\alpha}-\sst}]$.
\end{proof}

\begin{remark}\label{algorithm}
The above proof is constructive. In particular, we can apply this to obtain an algorithm to compute the class of the moduli space of semi-stable Higgs bundles of coprime rank and degree for any curve and fixed numerical invariants:  As we recalled in Section \ref{sec:preliminaries} the class of the moduli stack of is can be computed as:
$$ [\cM_{n}^d] = \bL^{n^2(g-1)}\sum_{\un{n},\un{d} \in I} [\Chain_{\un{n}}^{\un{d},\alpha-\sst}],$$
where $\un{\alpha}=(\alpha_i)$ with $\alpha_i=i(2g-2)$.

For any partition $\un{n}$ of $n$ Corollary \ref{finite} gives a finite bound for the degrees $\un{d}$ that give a non zero contribution in the above summation, so that in principle we are reduced resulting in a finite computation once we fixed the fixed rank $n$ of our bundles and the genus $g$ of our curve.
\end{remark} 

\begin{corollary}
Suppose $n\in \bN$ and $d\in\bZ$ are coprime, then the class of the moduli
space of semistable Higgs bundles $[M_n^d]\in \K0hat$ can be expressed in
terms of $\bL$ and the symmetric powers $[C^{(i)}]$ of the curve $C$.
\end{corollary}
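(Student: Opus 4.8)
The plan is to deduce this corollary directly from Theorem \ref{factorP1} together with the formula recalled in Section \ref{sec:preliminaries} expressing the Higgs moduli class as a sum of chain classes. First I would recall that for coprime $n$ and $d$ one has
\[
 [M_n^d] = (\bL-1)[\cM_n^{d,\sst}] = (\bL-1)\,\bL^{n^2(g-1)} \sum_{(\un{n},\un{d})\in I} [\Chain_{\un{n}}^{\un{d},\un{\alpha}-\sst}],
\]
where $\un{\alpha}=(\alpha_i)$ with $\alpha_i=i(2g-2)$, which manifestly satisfies condition ($\star$) since $\alpha_i-\alpha_{i-1}=2g-2$. By Corollary \ref{finite} the index set $I$ contributing nonzero terms is finite once $n$, $d$ and $g$ are fixed, so the sum is a genuine finite sum of classes of the form $[\Chain_{\un{n}}^{\un{d},\un{\alpha}-\sst}]$.

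The main input is then part (1) of Theorem \ref{factorP1}: each $[\Chain_{\un{n}}^{\un{d},\un{\alpha}-\sst}]$ lies in the subring of $\K0hat$ generated by $\bL$ and the classes $[C^{(i)}] = [\Sym^i(C)]$ of the symmetric powers of $C$. One can phrase this cleanly by noting that this subring is closed under addition and multiplication, and that $\bL-1$ and $\bL^{n^2(g-1)}$ obviously lie in it; hence the finite $\bZ$-linear combination above, multiplied by $(\bL-1)\bL^{n^2(g-1)}$, again lies in this subring. Equivalently, since $\bL = [\Sym^1(C\times\bP^1)]/\cdots$ — or more simply, since $\bL$ appears directly as a generator — everything is expressible in terms of $\bL$ and the $[C^{(i)}]$; one may further observe using the rationality $Z(C,t)=P(t)/((1-t)(1-\bL t))$ that the $[C^{(i)}]$ are themselves determined by $\bL$ and the finitely many classes $\Sym^i([C]-[\bP^1])$ for $i=1,\dots,2g$, but this refinement is not needed for the statement as posed.

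I would therefore structure the short proof as: (i) invoke the displayed identity from Section \ref{sec:preliminaries}; (ii) invoke Corollary \ref{finite} to reduce to a finite sum; (iii) invoke Theorem \ref{factorP1}(1) to express each summand in terms of $\bL$ and the $[C^{(i)}]$; (iv) conclude since the set of such expressions forms a ring containing $(\bL-1)\bL^{n^2(g-1)}$. There is essentially no obstacle here — all the work has been done in the preceding sections, and the only point requiring a word of care is the passage from the stack class $[\cM_n^{d,\sst}]$ to the coarse space class $[M_n^d]$, which uses the $\bG_m$-gerbe relation $[M_n^d]=(\bL-1)[\cM_n^{d,\sst}]$ valid in the coprime case and already recalled in the preliminaries.
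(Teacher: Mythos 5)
Your proposal is correct and follows essentially the same route as the paper: the paper's proof simply invokes the expression of $[M_n^d]$ in terms of classes of moduli spaces of semistable chains (via \cite[Corollary 2.2]{HGS}) and then applies Theorem \ref{factorP1}. Your version spells out the same argument in more detail (the gerbe relation, the finiteness of the contributing degrees via Corollary \ref{finite}, and the ring-theoretic closure), all of which is consistent with what the paper leaves implicit.
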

\begin{proof}
Since we know that $[M_n^d]$ can be expressed in terms of the classes of moduli spaces of semistable chains \cite[Corollary 2.2]{HGS} this follows immediately from the Theorem \ref{factorP1}.
\end{proof}

\section{Computation of the $y$-genus}\label{main}

In this section we apply Theorem \ref{factorP1} to a conjecture of Hausel, predicting the $y$-genus of the moduli space of $\PGL_n$-Higgs bundles \cite[Conjecture 5.7]{HauselMirror},  which is a specialization of a more general conjecture of Hausel and Rodriguez-Villegas \cite[Conjecture 5.6]{HauselMirror}. Thus we will now assume that our base field $k=\bC$. 

Let us recall that the class of the moduli stack of line bundles on our curve $C$ is given by $[\Bun_1^0]= \frac{P(1)}{\bL-1}.$
For the corresponding $E$-polynomials we have $E(P(1),x,y)=(1-x)^g(1-y)^g$, $E(\bL,x,y)=xy$, so that in particular we find 
$$H_y([\Bun_1^0],y)=0.$$
In particular Theorem \ref{factorP1} implies that $H_y(\Chain_{\un{n}}^{\un{d},\alpha-\sst},y)=0$ for all stability parameters $\alpha$ satisfying ($\star$).

To obtain interesting invariants Hausel suggested to consider $\PGL_n$-Higgs bundles instead of $\GL_n$-Higgs bundles.

Let us denote by $\cM^d(\PGL_n)$ the moduli stack of $\PGL_n$-Higgs bundles of degree $d\in \bZ/n\bZ$ on $C$, i.e., this is the stack classifying pairs $(\cP,\theta)$, where $\cP$ is a principal $\PGL_n$-bundle of degree $d$ and $\theta \in H^0(C, \cP \times^{\PGL_n} \Lie(\PGL_n))$.

The natural map $\GL_n \to \PGL_n$ induces a surjective map $\cM_n^d \to \cM^d(\PGL_n)$ and this makes $\cM_n^d$ into a torsor under $\cM_1^0$ over $\cM^d(\PGL_n)$.  To state this in a more down to earth language, let us denote by $\cM^{d,\sst}(\PGL_n)\subset \cM^d(\PGL_n)$ the substack of semistable $\PGL_n$ bundles, which can be defined as the image of $\cM_n^{d,\sst}$ in $\cM^d(\PGL_n)$.  The group $\Pic^0(C)$ acts on $M_n^d$ and in case $(n,d)=1$ it is not hard to see that $\cM^{d,\sst}(\PGL_n)\cong [M_n^d/T^*\Pic^0(C)]$. 

This description makes it easy to compare the cohomology groups of these spaces. Namely, since $T^*\Pic^0(C)\cong H^0(C,\Omega_C) \times \Pic^0$ we can apply (\cite[Proposition 7.4.5]{NgoLemme}) to $[(M_n^d/H^0(C,\Omega_C))/\Pic^0(C)]$ to obtain $$H^*_c(M_n^d,\bQ)\cong H^*(\Pic^0(C),\bQ)\tensor H^*_c(\cM^d(\PGL_n))\tensor H^*_c(\bA^g,\bQ).$$

Now we can explain how Theorem \ref{factorP1} allows us to compute $H_y(\cM^{d,\sst}(\PGL_n),y)$:  

We have already seen (Theorem \ref{factorP1} and  \cite[Corollary 2.2]{HGS}) that the class $[M_n^d]$ is divisible by $P(1)$, i.e.,
$[M_n^d]= P(1) [PM_n^d]$ for an explicitly computable class $[PM_n^d]\in \K0hat$. Therefore, we see that 
$$H_y(\cM^{d,\sst}(\PGL_n),y)=H_y([PM_n^d],y) (y)^{-g}.$$
To make this explicit let us once more denote by $$I:= \{ (\un{n},\un{d}) | \sum_i n_i =n, \sum_i d_i+i(2g-2)n_i=d \}$$ and fix $\un{\alpha}=(\alpha_i)$ with $\alpha_i=i(2g-2)$. Then we have
$$ H_y(\cM^{d,\sst}(\PGL_n),y) = (y)^{n^2(g-1)-g} \left(\sum_{(\un{n},\un{d})\in I}  H_y([\PChain_{\un{n}}^{\un{d},\alpha-\sst}],y\right).$$

We have seen that $H_y(P(1),y)=0$. Moreover for $\un{n}$ with $n_i\neq n_j$ we have $[\Chain_{\un{n}}^{\un{d},\alpha-\sst}]=P(1)^2 [\PPChain]$. Therefore, in the above summation, only summands with $\un{n}=(m,\dots,m)$ contribute to the $y$-genus of $\cM^d(\PGL_n)$.
By the wall-crossing result Proposition \ref{walls} we know moreover, that the $y$-genus of these terms does not depend on the stability parameter $\un{\alpha}$, as long as $\alpha$ is chosen to satisfy ($\star$), i.e.\ for $\alpha_{t}$ chosen as in Lemma \ref{goodalpha} and Lemma \ref{recursnn} we have
$$   H_y([\PChain_{\un{n}}^{\un{d},\un{\alpha}-\sst}],y) =  H_y([\PChain_{\un{n}}^{\un{d},\un{\alpha}_t-\sst}],y) \text{ for all } t\geq 0.$$
Thus the recursive formula \cite[Corollary 6.10]{HGS} shows that for $\un{n}=(m,\dots,m)$ we have:
\begin{align*}
H_y([\PChain_{\un{n}}^{\un{d},\un{\alpha}-\sst}],y)=& \prod_{i=2}^{m-1} (1-y^{i})^{g-1}(1-y^{i+1})^{g-1} \\
                                              &\cdot \prod_{i=1}^{n/m-1} H_y(\Sym^{d_i-d_{i+1}} (C\times \bP^{m-1}),y).
\end{align*}
Note that we have
$$\sum_{k=0}^\infty [\Sym^k(C\times \bP^{m-1}]t^k = Z(C\times \bP^{m-1},t)= \prod_{i=0}^{m-1} Z(C,\bL^it),$$
so that by Example \ref{example} we have $H_y(\Sym^{k} (C\times \bP^{m-1}),y)=0$ for $k>m(2g-2)$.

Thus, summing over all $\un{n},\un{d}$ we find:
\begin{align*}
H_y(\cM^{d,\sst}(\PGL_n),y) &= y^N \sum_{m|n} \sum_{d_0\leq d_1 \leq \dots \leq d_{n/m-1} \atop \sum d_i +i(2g-2)m  = d} H_y([\PChain_{\un{n}}^{\un{d},\alpha-\sst}],y) \\
&=y^N  \sum_{m|n} \Big( \prod_{i=2}^{m-1} (1-y^{i})^{g-1}(1-y^{i+1})^{g-1} \cdot\\
& \sum_{k_1,\dots,k_{n/m-1}\geq 0 \atop \sum ik_i \equiv d \mod n/m}  \prod_{i=1}^{n/m-1} H_y(\Sym^{k_i}(C\times \bP^{m-1}),y) \Big). \\
\end{align*}

Let us fix $m|n$ and let $\zeta:= e^{2\pi i \frac{m}{n}}$ be a primitive $\frac{n}{m}$-th root of unity. Then:
\begin{align*}
& \sum_{k_1,\dots,k_{\frac{n}{m}-1}\geq 0 \atop \sum ik_i \equiv d \mod \frac{n}{m}}  \prod_{i=1}^{\frac{n}{m}-1} H_y(\Sym^{k_i}(C\times \bP^{m-1}),y) \\
&= \frac{m}{n}\sum_{l=1}^{\frac{n}{m}} \zeta^{-ld} \sum_{k_1,\dots,k_{\frac{n}{m}-1}\geq 0}  \prod_{i=1}^{\frac{n}{m}-1} \zeta^{lik_i} H_y(\Sym^{k_i}(C\times \bP^{m-1}),y)\\
&= \frac{m}{n}\sum_{l=1}^{\frac{n}{m}} \zeta^{-ld} \prod_{i=1}^{\frac{n}{m}-1} H_y(Z(C\times \bP^{m-1},\zeta^{li}),y)\\
&= \frac{m}{n}\sum_{l=1}^{\frac{n}{m}} \zeta^{-ld} \prod_{j=0}^{m-1}\prod_{i=1}^{\frac{n}{m}-1} H_y(Z(C\times \bA^{j},\zeta^{li}),y)\\
&= \frac{m}{n}\sum_{l=1}^{\frac{n}{m}} \zeta^{-ld} \prod_{j=0}^{m-1}\prod_{i=1}^{\frac{n}{m}-1} (1-y^j\zeta^{li})^{g-1}(1-y^{j+1}\zeta^{li})^{g-1}
\end{align*}
since one of the factors for $j=0$ vanishes if $l$ is not coprime to $\frac{n}{m}$ this equals
\begin{align*}
&= \frac{m}{n}\sum_{1\leq l \leq \frac{n}{m} \atop (l,\frac{n}{m})=1} \zeta^{-ld} \left(\frac{n}{m}\right)^{g-1} \left(\prod_{j=1}^{m-1} \left(\frac{1-y^{j\frac{n}{m}}}{1-y^j}\right)^{2}\right)^{g-1} \left(\frac{1-y^n}{1-y^m}\right)^{g-1}\\
&= \frac{m}{n} \mu\left(\frac{n}{m}\right) \left(\frac{n}{m}\right)^{g-1} \prod_{j=1}^{m-1} \left(\frac{(1-y^{j\frac{n}{m}})^2}{(1-y^j)^2}\right)^{g-1} \left(\frac{1-y^n}{1-y^m}\right)^{g-1},
\end{align*}
because $\displaystyle{\mu(k)= \sum_{1\leq j \leq k \atop (j,k)=1} e^{\frac{2\pi i}{k}j}.}$
Summing over all $m|n$ and replacing $m$ by $\frac{n}{m}$ we find:
\begin{align*}
H_y(\cM^{d,\sst}(\PGL_n),y) &= y^N\sum_{m|n} \Big( \prod_{i=2}^{\frac{n}{m}-1} (1-y^{i})^{g-1}(1-y^{i+1})^{g-1} \cdot\\
                     & \frac{\mu(m)}{m} m^{g-1} \prod_{j=1}^{\frac{n}{m}-1} \left(\frac{(1-y^{jm})^2}{(1-y^j)^2}\right)^{g-1} \left(\frac{1-y^n}{1-y^{\frac{n}{m}}}\right)^{g-1}\Big)\\
                     &= y^N \left(\frac{1-y^n}{1-y}\right)^{g-1} \sum_{m|n}  \frac{\mu(m)}{m} \Big(m\prod_{j=1}^{\frac{n}{m}-1}(1-y^{jm})^2   \Big)^{g-1}.
\end{align*}
\begin{theorem}[Hausel's conjecture for the $y$-genus]
Let $n\in \bN$ and $d\in \bZ$ be coprime. Then the  $y$-genus of $\Higgs_n^{d,\sst}$ is given by:
\begin{align*}
H_y(\cM^{d,\sst}(\PGL_n),y)=& y^N \left(\frac{1-y^n}{1-y}\right)^{g-1} \sum_{m|n}  \frac{\mu(m)}{m} \left(m \prod_{j=1}^{\frac{n}{m}-1}(1-y^{jm})^2   \right)^{g-1}.
\end{align*}
\end{theorem}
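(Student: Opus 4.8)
The plan is to push the whole computation down to the motivic classes of moduli stacks of semistable chains, where Theorem \ref{factorP1} gives tight control, and then to perform an explicit finite summation. By \cite[Corollary 2.2]{HGS}, $[M_n^d]$ is a finite $\bL$-linear combination of the classes $[\Chain_{\un n}^{\un d,\alpha-\sst}]$, $(\un n,\un d)\in I$, with $\alpha_i=i(2g-2)$; Theorem \ref{factorP1} shows each of these is divisible by $P(1)=[\Pic^0(C)]$, so $[M_n^d]=P(1)[PM_n^d]$ for an explicit class $[PM_n^d]$. Comparing $E$-polynomials of this identity with the K\"unneth-type decomposition $H^*_c(M_n^d)\cong H^*(\Pic^0 C)\otimes H^*_c(\cM^d(\PGL_n))\otimes H^*_c(\bA^g)$ of \cite[Proposition 7.4.5]{NgoLemme}, the common factor $E(\Pic^0 C)=(1-x)^g(1-y)^g$ cancels on both sides --- this cancellation must be performed before setting $x=1$, since $H_y$ of both sides is plainly zero --- and one is left with $H_y(\cM^{d,\sst}(\PGL_n),y)=y^{-g}H_y([PM_n^d],y)$. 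Since $H_y(P(1),y)=0$, Theorem \ref{factorP1}(3) shows that in the expansion of $[PM_n^d]$ over $I$ every summand indexed by a non-constant rank vector $\un n$ remains divisible by $P(1)$ and hence contributes nothing; only the tuples with $\un n=(m,\dots,m)$, $m\mid n$, survive.

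For such $\un n=(m,\dots,m)$ I would invoke wall-crossing invariance. Proposition \ref{walls} shows $H_y([\PChain_{\un n}^{\un d,\alpha-\sst}],y)$ is unchanged as $\alpha$ varies subject to ($\star$): the Harder--Narasimhan strata added or removed across a wall have, by \cite[Proposition 4.8]{HGS}, classes assembled from at least two semistable chains of strictly smaller rank, hence divisible by $P(1)^2$ (Theorem \ref{factorP1}), so their $y$-genus vanishes. Thus one may take $\alpha$ equal to the large parameter of Lemma \ref{recursnn}, where $[\Chain_{\un n}^{\un d,\alpha-\sst}]$ is computed by the recursion of \cite[Corollary 6.10]{HGS}; dividing by $P(1)$ and applying $H_y$ collapses this to the closed formula $H_y([\PChain_{\un n}^{\un d,\alpha-\sst}],y)=\prod_{i=2}^{m-1}(1-y^i)^{g-1}(1-y^{i+1})^{g-1}\cdot\prod_{i=1}^{n/m-1} H_y(\Sym^{k_i}(C\times\bP^{m-1}),y)$, where the $k_i\ge 0$ are the successive degree gaps of $\un d$. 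Since $Z(C\times\bP^{m-1},t)=\prod_{j=0}^{m-1}Z(C,\bL^j t)$, Example \ref{example} shows each factor $H_y(\Sym^{k}(C\times\bP^{m-1}),y)$ is a polynomial in $y$ vanishing for $k>m(2g-2)$, so only finitely many $\un d$ contribute and the sum over $I$ is finite.

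It remains to sum over the admissible $\un d$, i.e.\ over $k_i\ge 0$ subject to one linear congruence $\sum i k_i\equiv d\pmod{n/m}$ arising from $\sum d_i+i(2g-2)m=d$. I would isolate the congruence by the standard root-of-unity filter: with $\zeta=e^{2\pi i m/n}$ a primitive $(n/m)$-th root of unity, the constrained sum becomes $\tfrac mn\sum_{l=1}^{n/m}\zeta^{-ld}\prod_i H_y(Z(C\times\bP^{m-1},\zeta^{li}),y)$, and by Example \ref{example} each factor equals $\prod_{j=0}^{m-1}(1-y^j\zeta^{li})^{g-1}(1-y^{j+1}\zeta^{li})^{g-1}$. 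The $j=0$ term contributes $(1-\zeta^{li})^{g-1}$, which forces $\zeta^{li}$ to run over every nontrivial $(n/m)$-th root of unity and so annihilates all $l$ with $\gcd(l,n/m)>1$; for the surviving $l$ the product telescopes to $(n/m)^{g-1}\prod_{j=1}^{m-1}\bigl((1-y^{jn/m})/(1-y^j)\bigr)^{2(g-1)}\bigl((1-y^n)/(1-y^m)\bigr)^{g-1}$, leaving the Ramanujan sum $\sum_{\gcd(l,n/m)=1}\zeta^{-ld}=\mu(n/m)$. Reassembling the pieces, collecting the accumulated powers of $y$ into $y^N$ with $N=(n^2-1)(g-1)$, and substituting $m\mapsto n/m$ produces the asserted formula. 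I expect this last paragraph --- the root-of-unity bookkeeping, the vanishing of the $\gcd>1$ terms, the recognition of the Ramanujan sum, and keeping the exponents of $y$ straight --- to be the only genuinely delicate step; everything preceding it is a direct application of Theorem \ref{factorP1}, Proposition \ref{walls}, and \cite[Corollary 6.10]{HGS}.
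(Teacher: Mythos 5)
Your proposal is correct and follows essentially the same route as the paper: reduce to chain classes via \cite[Corollary 2.2]{HGS}, cancel the $P(1)$ factor against Ng\^o's K\"unneth decomposition before specializing $x=1$, discard non-constant rank vectors via divisibility by $P(1)^2$, use wall-crossing invariance of the $y$-genus of $\PChain$ to reach the large-parameter formula of \cite[Corollary 6.10]{HGS}, and finish with the root-of-unity filter and the Ramanujan sum $\sum_{(l,n/m)=1}\zeta^{-ld}=\mu(n/m)$ (valid since $(d,n)=1$). The only point worth making explicit is that the subtracted correction terms in Lemma \ref{recursnn} also die for the same $P(1)^2$-divisibility reason you invoke for the Harder--Narasimhan strata, which is exactly how the paper collapses the recursion to the closed product formula.
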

\begin{remark}
Because our computation is made using the Grothendieck ring of varieties, we use the $y$-genus with compact supports in the above theorem.
In \cite{HauselMirror} Hausel formulates his conjecture for the $y$-genus without supports, which can be read off from the above result as follows:
Since the cohomology of $\cM^{d,\sst}(\PGL_n)$ is pure, Poincar\'e duality allows to deduce the Hodge polynomial 
$$H(\cM^{d,\sst}(\PGL_n);x,y):= \sum_{p,q} \dim H^{p,q}(\cM^{d,\sst}(\PGL_n)) x^py^q $$ 
from the E-polynomial of $\cM^{d,\sst}(\PGL_n)$ as:
$$H(\cM^{d,\sst}(\PGL_n);-x,-y)= (xy)^{2N} E(\cM^{d,\sst}(\PGL_n);x^{-1},y^{-1}).$$
The $y$-genus without supports being defined to be $H(\cM^{d,\sst}(\PGL_n);-1,y)$ we find that this equals $(-y)^{2N}H_y(\cM^{d,\sst}(\PGL_n),-y^{-1})$,
which gives the formula conjectured by Hausel.
\end{remark}

\end{document}